\newtheorem{theorem}{Theorem}[section]
\newtheorem{lemma}[theorem]{Lemma}
\newtheorem{proposition}[theorem]{Proposition}
\newtheorem{corollary}[theorem]{Corollary}
\theoremstyle{definition}
\newtheorem{problem}[theorem]{Problem}
\theoremstyle{remark}
\numberwithin{equation}{section}
\def\e{\varepsilon}
\def\CC{{\mathbb C}}
\def\NN{{\mathbb N}}
\def\Int{{\rm Int}\,}
\def\dist{{\rm dist}\,}
\def\conv{{\rm conv}\,}
\def\T{{\mathcal T}}
\def\la{\lambda}
\def\card{{\rm card}\,}
\begin{document}
\title[Matrix representations of bounded operators]{Matrix representations of arbitrary bounded
operators on Hilbert spaces}

\author{Vladimir M\"uller}
\address{Institute of Mathematics,
Czech Academy of Sciences,
ul. \v Zitna 25, Prague,
 Czech Republic}
\email{muller@math.cas.cz}

\author{Yuri Tomilov}
\address{Institute of Mathematics, Polish Academy of Sciences,
\' Sniadeckich str.8, 00-656 Warsaw, Poland}
\address{Faculty of Mathematics and Computer Science, Nicolaus Copernicus University, 
Chopina Str. 12/18, 87-100 Toru\'n, Poland}
\email{ytomilov@impan.pl}

\subjclass{Primary 47B02, 47A67, 47A08; Secondary  47A12, 47A13}


\thanks{The first author was supported
by grant No. 20-22230L of GA CR and RVO:67985840.
The second author was partially supported
by NCN grant UMO-2017/27/B/ST1/00078.}

\keywords{Hilbert space operator, orthonormal bases, matrix representations, numerical range, spectrum}

\begin{abstract}
We show that under natural and quite general 
assumptions, a large part of a matrix for a bounded
linear operator on a Hilbert space can be preassigned.
The result is obtained in a more general setting of operator
tuples leading to interesting consequences, e.g. when the tuple
consists of powers of a single operator. We also prove several variants
of this result of independent interest. The paper substantially 
extends former research on matrix representations in infinite-dimensional spaces dealing
mainly with prescribing the main diagonals.
\end{abstract}

\maketitle
\section{Introduction}
Let $H$ be a separable infinite-dimensional Hilbert space, and let $B(H)$ stand for the space of bounded linear operators
on $H.$
If $T \in B(H),$ then $T$ allows for a variety of matrix representations  $A_T:=\langle T u_j, u_n \rangle_{n, j=1}^\infty$
induced by the set of orthonormal bases $(u_n)_{n=1}^\infty$ in $H.$
In other words, for a fixed basis $(u_n)_{n=1}^\infty$ we study the matrix elements $\langle UT U^{-1}u_j, u_n\rangle$ of the unitary orbit
$\{U TU^{-1}: U \, \text{is unitary}\}$
of $T$ (where then the choice of $(u_n)_{n=1}^{\infty}$ is essentially irrelevant).

While the study of matrix representations goes back to the birth of operator theory,
a number of pertinent facts and insights of their structure were obtained only recently.
In particular, starting from the pioneering works \cite{Kadison02a}, 
\cite{Kadison02b}, 
\cite{Arveson06}, and 
 \cite{Arveson_USA} by Kadison and Arveson on the main diagonals
of projections (and normal operators with finite spectrum), the research on main diagonals of Hilbert space operators attracted a substantial attention.
For sample works in this direction see e.g. \cite{Bownik}, \cite{Bownik1}, \cite{Jasper},
\cite{Kaftal}, \cite{Kennedy},  \cite{Massey},  and the recent survey \cite{LW20}.  
The mainstream of the research on diagonals concentrated around normal operators and their natural subclasses,
consisting 
of unitary and selfadjoint operators, and addressed the problem of 
characterizing sets of possible main diagonals $\mathcal D(T)=\{(\langle T u_n, u_n \rangle)_{n=1}^\infty\}$
for classes of operators of $T \in B(H)$ when $(u_n)_{n=1}^\infty$ varies through the set all orthonormal bases in $H$. In \cite{MT} we've changed this point of view to a more demanding task
of describing the set $\mathcal D(T),$
or at least its substantial subsets, for a fixed $T \in B(H).$
In addition, in \cite{MT}, the problem   
was addressed in a more general  framework of operator tuples $\T=(T_1, \dots, T_k)\in B(H)^k, k \in \mathbb N.$
We relied on the properties of the so-called essential numerical range $W_e(\T)$  
and their relations to the essential spectrum $\sigma_e(\T),$ especially in the case of power tuples $\T=(T, \dots, T^k), \  T \in B(H).$
These properties revealed a new structure in $\mathcal D(T)$ and led in particular  to the so-called non-Blaschke type conditions 
\begin{equation}\label{non-bl}
\sum_{n=1}^\infty {\rm dist}(\lambda_n, \partial W_e(T))=\infty
\end{equation}
on $(\lambda_n)_{n=1}^\infty$ in the interior $\Int\, W_e(T)$ of $W_e(T)$ to be realized by the main diagonal of $T$.
The relevance of $W_e(T)$ in this context 
was first noted by Stout \cite{Stout},  Fan \cite{Fan84} and Herrero \cite{H},
and these works were generalized substantially in \cite{MT}. 
The ideas of \cite{MT} appeared to be fruitful and were further elaborated in \cite{MT_JFA1},
where a systematic approach to matrix representations of bounded operators and operator tuples
was initiated.  
Among other things, we found conditions
for  prescribing three diagonals, for having bands of zeros around the main diagonal, and described several general situations 
where matrix representations with size restrictions
on the set of their matrix elements are possible. In particular, it was shown that if $T \in B(H),$ then $0 \in W_e(\T)$ if and only if for each sequence $(a_n)_{n=1}^\infty\not \in \ell^1(\mathbb N)$ 
there exists an orthonormal basis $(u_n)_{n=1}^{\infty}$ in $H$ such that
$|\langle Tu_j, u_n\rangle| \le \sqrt{|a_{n}a_{j}|}$ for all $j$ and $n,$ see Section 
\ref{section_powers} for more on this. 

However, a more general and natural problem of matching arrays $(a_{nj}),$ $(n,j) \in B,$ with $B \subset \mathbb N\times \mathbb N$ being "large" by a  matrix of a given 
 $T \in B(H)$ 
has been left in \cite{MT_JFA1} widely open. It was not even quite clear whether our methods can handle a band of $(a_{nj})$ consisting from more than three diagonals.
This paper will bridge this gap and show that developing the approach from \cite{MT_JFA1},
one can prescribe the arrays of quite a general nature.

We study the following problem:

\begin{problem}\label{problem}
Let $\T=(T_1,\dots,T_k)\in B(H)^k$, $B \subset \mathbb N \times \mathbb N,$ and $\{a_{nj}: (n,j)\in B\}\subset \mathbb C^k$ be fixed. 
What are natural assumptions on $\T$, $B$ and $(a_{nj})$ to ensure the existence of an orthonormal basis $(u_n)_{n=1}^{\infty}\subset H$ such that
\[
\langle \T u_j,u_n\rangle=
\left( \langle T_1 u_j, u_n\rangle,  \dots, \langle T_k u_j, u_n \rangle \right) =a_{nj}, \qquad (n,j) \in B.
\]
In other words, what is the degree of arbitrariness in a matrix representation of $\T$? 
\end{problem}

Note that a similar problem in the setting of finite-dimensional spaces was studied by a number of authors.

A particular case of Problem \ref{problem} are so-called sparse matrix representations
for $T \in B(H)$ when the arrays $(a_{nj})$ corresponding to admissible sets $B$ of $(n,j)$ consist solely of zeros.
Note that sparse representations appear useful in a number of problems from operator theory.
Recall, in particular, that any $T \in B(H)$ admits a universal block three-diagonal form with an exponential control
on (finite-dimensional) block sizes. Moreover, such blocks can be  further sparsified. To our knowledge, in a full generality,
such block three-diagonal forms appeared first in \cite{Weiss}
and they proved to be crucial
e.g. in the study of commutators \cite{Anders}, \cite{Weiss}, \cite{Loreaux1}, operator norm estimates \cite{Muller_Studia} or properties of diagonal operators \cite{FoWu93}.
Nice accounts of sparse representations can be found in \cite{Pat} and \cite[Sections 4 and 5]{Loreaux1}.
It is instructive to observe that  matrix representations of the opposite kind can be found for any $T\in B(H)$ which is not a scalar multiple of the identity operator: 
by \cite[Theorem 2]{Radjavi}, one can find a basis $(u_n)_{n=1}^\infty \subset H$ such that $\langle T u_j,u_n\rangle \neq 0$ for all $n$ and $j$.
Note also that the issue of sparse representations arises also in the finite-dimensional setting,
where patterns of zeros in a matrix achievable by a unitary transformation are studied.
For sample papers in this direction one may consult  \cite{Doc07}, \cite{HoSc07} and \cite{An10}, 
though we feel that this setting is rather different from the subject of the present paper.

Let $T\in B(H)$ and $(u_n)_{n=1}^\infty$ be an orthonormal basis. It is natural to measure the sparsity of the corresponding matrix $A_T$ 
by the so-called (upper) density given by 
$$
d(A_T):=\limsup_{N \to \infty}N^{-2} {\rm card}\, \{(n,j)\in\NN\times\NN: n,j\le N, \langle Tu_j,u_n\rangle \ne 0\}.
$$
The density is of course bases dependent, and it is of practical interest to have it as small as possible.
Among other things, it was proved in \cite[Corollary 5.7]{Loreaux1} that  for every operator $T\in B(H)$ there is an orthonormal basis $(u_n)_{n=1}^\infty$ in which $A_T$ has density zero.
Our technique allows one to show that in fact much stronger statements hold.
Our sparse representations are quite different from the ones mentioned above, since, in particular,
apart from being sparse in a much stronger sense, the set of their zero elements
can have a comparatively general geometry.

To state our results we need to define several notions describing  size of subsets in $\mathbb N \times \mathbb N.$
They will be basic for all of our considerations to follow.
Denote by $\Delta$ the main diagonal of $\mathbb N\times \mathbb N,$ $\Delta=\{(n,n):n\in\NN\}$.
A set $B\subset \mathbb N\times \mathbb N$ is said to be \emph{subdiagonal} if $B\subset\bigl\{(n,j)\in\NN\times\NN: n>j\bigr\}$. 
We say that a set $B\subset (\mathbb N \times \mathbb N)\setminus\Delta$ is \emph{admissible} if for every $m\in\NN$ there exists $n\in \NN,$ $n >m,$ such that $(j,n)\notin B$ and $(n,j)\notin B$ for all $j=1,\dots,m$.

Clearly a subdiagonal set $B$ is \emph{admissible} if and only if for every $m\in\NN$ there exists $n>m$ such that $(n,j)\notin B$ for all $j=1,\dots,m$.

Note that an admissible set can be quite large. For example, the set 
$$
\{(n,j): n>j\}\setminus\{(2^k,j): k\in\NN, j\le k\}
$$
is an admissible subdiagonal set. Similarly
$$
\{(n,j):n\ne j\}\setminus\{(2^k,n), (n,2^k): k\in\NN, n\le k\}
$$
is admissible.

First, as a warm-up, we study the sparse representations and prove that any tuple of bounded operators has a very sparse matrix representation.

\begin{theorem}\label{sparse}
Let $\T \in B(H)^k$
and $B\subset(\NN\times\NN)\setminus\Delta$ be an admissible set. Then there exists an orthonormal basis $(u_n)_{n=1}^\infty \subset H$ such that
$$
\langle\T u_j,u_n\rangle=0
$$
for all $(n,j)\in B$.
\end{theorem}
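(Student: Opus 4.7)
The plan is to build $(u_n)_{n=1}^\infty$ recursively, processing indices in their natural order but organising the work into stages indexed by $s=1,2,\dots$, each stage devoted to bringing one vector of a fixed auxiliary orthonormal basis $(e_s)_{s=1}^\infty$ of $H$ into the span. The invariant I would maintain after stage $s$ is that there exist $0=m_0<m_1<\dots<m_s$ and orthonormal $u_1,\dots,u_{m_s}$ with $\langle\T u_j,u_n\rangle=0$ for every $(n,j)\in B$ such that $n,j\le m_s$, together with $e_s\in\text{span}\{u_1,\dots,u_{m_s}\}$. Completeness of the final sequence will then be automatic.

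Stage $s$ starts by letting $u^*$ be the normalised residual of $e_s$ modulo $\text{span}\{u_1,\dots,u_{m_{s-1}}\}$; if that residual vanishes one simply sets $m_s:=m_{s-1}$ and proceeds. Otherwise, admissibility of $B$ applied with parameter $m=m_{s-1}$ produces an index $n^*>m_{s-1}$ such that $(j,n^*)\notin B$ and $(n^*,j)\notin B$ for every $j\le m_{s-1}$; the position $n^*$ will host $u^*$. For each intermediate $j$ with $m_{s-1}<j<n^*$, I would pick $u_j$ to be any unit vector orthogonal to the finite set
\[
\{u_{j'}:j'<j\}\cup\bigcup_{i=1}^k\{T_iu_{j'}:(j,j')\in B,\,j'<j\}\cup\bigcup_{i=1}^k\{T_i^*u_{j'}:(j',j)\in B,\,j'<j\}\cup\{u^*,T_1u^*,\dots,T_ku^*,T_1^*u^*,\dots,T_k^*u^*\},
\]
which exists because $H$ is infinite-dimensional. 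Finally set $u_{n^*}:=u^*$ and $m_s:=n^*$. The first three groups in the display enforce orthonormality and the $B$-relations anchored at row and column $j$ against previously chosen vectors; the last group \emph{pre-empts} the $B$-relations that $u^*$ will later have to honour at position $n^*$ against the $u_j$ built in this stage.

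The verification is then routine. Orthonormality holds by design. For any $(n^*,j)\in B$ with $j\le m_{s-1}$ the required equality is vacuous by the choice of $n^*$, while for $m_{s-1}<j<n^*$ the pre-emptive condition $u_j\perp T_i^*u^*$ yields $\langle T_iu_j,u_{n^*}\rangle=\langle u_j,T_i^*u^*\rangle=0$; the symmetric conditions $(j,n^*)\in B$ are handled by $u_j\perp T_iu^*$ in the same way. Since $e_s\in\text{span}\{u_1,\dots,u_{m_s}\}$ at every stage and $(e_s)$ is a basis of $H$, the sequence $(u_n)_{n=1}^\infty$ is a complete orthonormal basis satisfying all the prescribed vanishing relations.

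The conceptual core, and the single delicate step in my view, is the joint use of admissibility and the intermediate bookkeeping: admissibility is invoked precisely to secure a slot $n^*$ that carries no $B$-link to the already committed vectors $u_1,\dots,u_{m_{s-1}}$, and the only remaining $B$-links that $u^*$ might trigger at $n^*$ involve the not-yet-chosen $u_j$ with $m_{s-1}<j<n^*$, which can absorb the required orthogonalities by enlarging the finite constraint list. Without admissibility one could not guarantee such a free slot, and the scheme would collapse; infinite-dimensionality of $H$ is the only other resource used.
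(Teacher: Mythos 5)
Your proposal is correct and follows essentially the same strategy as the paper's proof: use admissibility to reserve a ``free slot'' $n^*$ at each stage, place there the normalised residual of the fixed basis vector, and choose the intermediate $u_j$ to absorb all the required orthogonalities. The only cosmetic differences are that you compute the residual $u^*$ \emph{before} filling the intermediate slots and make the intermediate $u_j$ orthogonal to $u^*$ directly (the paper instead makes them $\perp^{(\T)} y_s$ and takes the residual afterwards, which amounts to the same thing since $u^*$ lies in the span of $y_s$ and the previously committed vectors), and that you impose orthogonality against $T_i u_{j'}$, $T_i^* u_{j'}$ only for the pairs actually appearing in $B$, whereas the paper simply imposes $\perp^{(\T)}$ against every earlier vector; both are valid, yours being marginally more economical.
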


Note that in Theorem \ref{sparse} the orthonormal basis $(u_n)_{n=1}^{\infty}$ is common for all operators $T_1,\dots,T_k$.

The condition $B\cap\Delta=\emptyset$ is in general necessary even for single operators. Indeed, if $T\in B(H)$ and the numerical range $W(T)$ of $T$ does not contain zero,
 then the main diagonal of any matrix representation of $T$ consists of non-zero entries.

A better result can be obtained if we assume that $0\in\Int W_e(\T)$. In this situation we can obtain even the zero main diagonal and the matrix representation becomes extremely sparse.
In particular, the folowing statement holds.
\begin{theorem}\label{sparse1}
Let 
$\T \in B(H)^k$
and $0\in\Int W_e(\T)$. Let $f:\NN\to\NN$ be any function satisfying $\lim_{m\to\infty}f(m)=\infty$. Then there exists an orthonormal basis $(u_n)_{n=1}^\infty$ in $H$ such that
$$
\card\bigl\{(n,j)\in\NN\times\NN: n,j\le m,\, \langle \T u_j,u_n\rangle\ne 0\bigr\}\le f(m)
$$
for all $m\in\NN$.
\end{theorem}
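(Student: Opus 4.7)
The plan is an inductive construction of the basis that refines the strategy of Theorem~\ref{sparse} by exploiting the strict interiority $0\in\Int W_e(\T)$ to additionally force every diagonal entry to vanish. Fix nondecreasing positive integers $p_n\to\infty$ so slowly that $\sum_{n\le m}2p_n\le f(m)$ for all sufficiently large $m$; these are the per-row/column budgets for nonzero off-diagonal entries. Fix also a countable dense subset $(w_n)$ of $H$ and a surjection $\phi:\NN\to\NN$ taking each value infinitely often, which will drive the basis property of $(u_n)$.

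At the $n$th step, with $u_1,\dots,u_{n-1}$ already in hand, choose an exceptional set $E_n\subset\{1,\dots,n-1\}$ of size $\le p_n$ and set
\[
L_n=\bigl\{u\in\mathrm{span}\{u_1,\dots,u_{n-1}\}^\perp:\langle T_i u,u_j\rangle=\langle T_i u_j,u\rangle=0,\ i\le k,\ j\in\{1,\dots,n-1\}\setminus E_n\bigr\}.
\]
Then $L_n$ has finite codimension in $H$, and a standard compact-perturbation argument (applied component-wise for the tuple) yields $W_e(\T|_{L_n})=W_e(\T)$, so $0$ is still in the interior of $W_e(\T|_{L_n})$. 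Since the numerical range is convex and $W_e\subset\overline W$, this forces $0\in\Int W(\T|_{L_n})$, so we may choose a unit vector $u_n\in L_n$ with $\langle\T u_n,u_n\rangle=0$. To secure the basis property we additionally demand $\langle u_n,z_n\rangle\ne 0$ for $z_n=w_{\phi(n)}-P_{n-1}w_{\phi(n)}$ (when nonzero); this extra linear requirement is accommodated, if necessary, by enlarging $E_n$ by at most one index, which remains within the budget $p_n$ for $n$ large.

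Counting is then immediate: in the top-left $m\times m$ block, the diagonal is zero by construction, and the only positions $(n,j)$ or $(j,n)$ where the matrix entry can be nonzero are those with $n\le m$ and $j\in E_n$; there are at most $\sum_{n\le m}2p_n\le f(m)$ of these. The main obstacle is the inductive step itself, where one must jointly satisfy the \emph{quadratic} zero-diagonal constraint and the \emph{linear} density constraint inside $L_n$; that is, the null variety $\{u\in L_n:\|u\|=1,\ \langle\T u,u\rangle=0\}$ must not be swallowed by any proper hyperplane $z_n^\perp$. The robustness supplied by strict interiority of $W_e(\T)$ under finite-codimension compressions is exactly what secures this, allowing each successive step—and hence the whole construction—to go through.
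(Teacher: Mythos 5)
There is a genuine gap in the counting. You ask for nondecreasing \emph{positive integers} $p_n\to\infty$ with $\sum_{n\le m}2p_n\le f(m)$ for large $m$, and use $p_n$ as the budget of nonzero off-diagonal entries in row/column $n$. But $p_n\ge 1$ for every $n$ forces $\sum_{n\le m}2p_n\ge 2m$, which cannot be dominated by $f(m)$ whenever $f$ grows sublinearly, e.g.\ $f(m)=\lceil\sqrt m\rceil$ or $f(m)=\lceil\log m\rceil$, both allowed under the sole hypothesis $f(m)\to\infty$. A per-row budget scheme in which every row is permitted at least one nonzero off-diagonal entry is therefore untenable; the nonzero entries must be concentrated in a \emph{sparse} set of exceptional rows and columns, most rows having budget zero.

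This is exactly what the paper does: it first proves Theorem~\ref{sparse2} (for any admissible $B\subset(\NN\times\NN)\setminus\Delta$ one can force $\langle\T u_j,u_n\rangle=0$ on $B\cup\Delta$), then deduces Theorem~\ref{sparse1} by choosing $n_1<n_2<\cdots$ with $f(n_k)\ge(k+1)^2$ and taking $B$ to be the complement of $\bigl\{(n_k,j),(j,n_k):k\in\NN,\ j\le k\bigr\}$. The $k$-th exceptional index contributes at most $2k$ nonzero positions, but the number of exceptional indices $\le m$ is at most $\sqrt{f(m)}$, giving the bound $\sum_{r\le k}2r\le(k+1)^2\le f(m)$. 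Crucially, the density vectors (your $z_n$) are injected only at the exceptional rows $n_s$, via a component $b_{n_s}$ and the explicit decomposition $u_{n_s}=\sqrt{1-\eta}\,v_{n_s}+\sqrt{\eta}\,b_{n_s}$; this makes completeness compatible with most rows having budget zero. Two further weaknesses in your argument: for $k>1$ the joint numerical range $W(\T)$ need not be convex, so ``$W_e\subset\overline W$ and convexity of $W$'' does not yield $0\in\Int W(\T|_{L_n})$ — the correct tool is Lemma~\ref{codim}(b), which is nontrivial; and ``accommodated by enlarging $E_n$'' does not by itself force $\langle u_n,z_n\rangle\ne 0$, since dropping orthogonality constraints enlarges $L_n$ but does not steer $u_n$ toward $z_n$ — the existence of a suitable unit vector in the quadric $\{u:\langle\T u,u\rangle=0\}$ with prescribed inner product against $z_n$ must actually be produced, which is what the weighted-sum construction accomplishes.
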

Under the assumption  $0 \in \Int W_e(\T)$, the theorem substantially extends 
the results in \cite[Section 5]{Loreaux1}. The assumption is 
natural and optimal as far as one is interested in any increasing $f$.
For a version of Theorem \ref{sparse1} not involving any assumptions on $\T$ see
Theorem \ref{sparse3} below.

Let us now consider Problem \ref{problem} in full generality. 
Under mild assumptions on $T\in B(H)$ we show that large subdiagonal subsets of a matrix $A_T$ 
can be preassigned if the size of the corresponding matrix elements is restricted appropriately.

\begin{theorem}\label{theorem2_intro}
Let $T\in B(H)$ be an operator, which is not of the form
$T=\la I+ K$ for some $\la \in \CC $ and a compact operator
$K\in B(H).$
Then there exists $\delta>0$ {\rm(}depending only on the diameter of $W_e(T)${\rm)} with the following property: if $B\subset\NN\times\NN$ is subdiagonal and admissible, and   
 $\{a_{nj}: (n,j)\in B\}\subset \mathbb C$ satisfy
 \[
\sum_{n:(n,j)\in B}|a_{nj}|\le\delta  \quad  \text{for all}\,\,  j \qquad \text{and} \qquad \sum_{j: (n,j)\in B}|a_{n,j}|\le\delta \quad \text{for all}\, \, n, 
\]
then there is an orthonormal basis $(u_n)_{n=1}^\infty$ in $H$ such that
$$ 
\langle Tu_j,u_n\rangle=a_{nj}
$$
for all $n,j\in\NN$ with $(n,j)\in B$.
\end{theorem}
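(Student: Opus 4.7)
\smallskip

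\noindent\emph{Proof plan.} I begin with two reductions that set up the geometry. Because $B\cap\Delta=\emptyset$, every off-diagonal entry of a matrix of $T$ is invariant under $T\mapsto T-\mu I$, and the row- and column-sum hypotheses on $\{a_{nj}\}$ are invariant under $T\mapsto e^{i\theta}T$ (which simply rotates every $a_{nj}$). Since $T\ne\la I+K$ for any compact $K$, the convex compact set $W_e(T)$ is not a singleton and hence contains a chord of length $d:=\mathrm{diam}\,W_e(T)>0$; after translating and rotating I may therefore assume $[-d/2,d/2]\subset W_e(T)$. The threshold $\delta$ will eventually be taken proportional to $d$.

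\smallskip

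I would then build $(u_n)$ by induction, exploiting the fact that since $B$ is subdiagonal, every constraint at step $n$ involves only the already-constructed vectors $u_1,\dots,u_{n-1}$. Writing $M_n:=\mathrm{span}\{u_k:k<n\}$, $v_j:=P_{M_n^\perp}Tu_j$ for $j\in J_n:=\{j<n:(n,j)\in B\}$, and $a:=(a_{nj})_{j\in J_n}$, the task at step $n$ reduces to producing a unit vector $u_n\in M_n^\perp$ satisfying $\langle v_j,u_n\rangle=a_{nj}$ for $j\in J_n$. The minimum-norm solution lies in $\mathrm{span}\{v_j\}_{j\in J_n}$ and has squared norm $a^*G_n^{-1}a$, with $G_n$ the Gram matrix of the $v_j$; padding by a vector in $M_n^\perp\cap\mathrm{span}\{v_j\}^\perp$ then gives a unit-norm solution provided $a^*G_n^{-1}a\le 1$. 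The row-sum bound yields $\|a\|_2\le\delta$, so the step succeeds as long as $\lambda_{\min}(G_n)$ is bounded below uniformly in $n$. At the ``free'' indices supplied by admissibility I would moreover use the standard target-a-dense-sequence device to guarantee completeness of the basis in the limit.

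\smallskip

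The main obstacle is the uniform lower bound on $\lambda_{\min}(G_n)$: nothing in the hypotheses prevents the $v_j$ from becoming nearly dependent, and bad conditioning at one step can propagate. This is exactly where $\mathrm{diam}\,W_e(T)>0$ becomes decisive. I would interleave the prescribed steps with auxiliary corrections in which the $u_n$'s are mildly modified by components drawn from orthonormal sequences $(x_k),(y_k)$ satisfying $\langle Tx_k,x_k\rangle\to d/2$ and $\langle Ty_k,y_k\rangle\to -d/2$, both of which exist because $\pm d/2\in W_e(T)$. Such corrections force each $Tu_j$ to maintain a component of norm $\gtrsim d$ in every future $M_{n'}^\perp$ and, more subtly, they keep the $v_j$'s geometrically separated inside those complements. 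Combined with the column-sum hypothesis $\sum_{n:(n,j)\in B}|a_{nj}|\le\delta$, which bounds the accumulated perturbation along each column $j$, this should yield a uniform lower bound $\lambda_{\min}(G_n)\ge c(d)>0$ throughout the induction. Choosing $\delta$ to be a sufficiently small multiple of $d$ then closes the induction and produces the required basis.
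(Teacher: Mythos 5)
Your high-level plan — induct on $n$, reduce the constraints $\langle Tu_j,u_n\rangle=a_{nj}$ to a linear system for $u_n\in M_n^\perp$, and close the induction provided the relevant Gram matrix is well conditioned — is a genuinely different route from the paper, and it has a real gap at exactly the point you yourself flag as ``the main obstacle.''

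The gap: you reduce everything to a uniform lower bound $\lambda_{\min}(G_n)\ge c(d)>0$ for the Gram matrix of the projected vectors $v_j=P_{M_n^\perp}Tu_j$, and you propose to enforce this by ``interleaving auxiliary corrections'' built from sequences with $\langle Tx_k,x_k\rangle\to d/2$, $\langle Ty_k,y_k\rangle\to -d/2$. But no concrete mechanism is given: it is not explained how the corrections are injected into $u_j$ without destroying the already-satisfied constraints $\langle Tu_i,u_j\rangle=a_{ji}$, how they survive future projections $P_{M_{n'}^\perp}$, or why they make the $v_j$'s pairwise well-separated (a lower bound on each $\|v_j\|$ alone does not control $\lambda_{\min}(G_n)$, since near-collinearity is the danger). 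In particular, since $u_n$ is added to $M_{n'}$ for all $n'>n$, each choice of $u_n$ alters all future $v_j$'s, so the conditioning must be argued to propagate — and that is precisely what is missing. The ``should yield'' at the end of the paragraph is where a proof is needed.

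By contrast, the paper avoids the conditioning question entirely by construction. It proves a local lemma (Lemma \ref{lemma1}): in any finite-codimension subspace $M$ one can find $v\perp z$ with $\|v\|\|z\|\le 2\sqrt2/C$ and $\langle Tv,z\rangle=1$, where $C<\mathrm{diam}\,W_e(T)$; this is the quantitative use of the diameter (via two far-apart points $\lambda,\mu\in W_e(T)$, not necessarily $\pm d/2$). Then each basis vector is assembled as
$$
u_n=\alpha_nw_n+\sum_{j:(j,n)\in B}\beta_{jn}v_{jn}+\sum_{i:(n,i)\in B}\gamma_{ni}z_{ni}+b_n,
$$
with $\beta_{jn}\gamma_{jn}=a_{jn}$, where \emph{all} the building blocks $w_m,v_{mi},z_{mi},b_m$ are chosen pairwise $\perp^{(T)}$. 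Because of this strong mutual orthogonality, when one expands $\langle Tu_m,u_n\rangle$ into seven groups of terms, every term vanishes except the single designed one $\langle \beta_{nm}Tv_{nm},\gamma_{nm}z_{nm}\rangle=a_{nm}$. Thus the constraints are decoupled from the outset and there is no Gram matrix to control; the row- and column-sum hypotheses enter only in showing $\|u_n\|=1$ (so $\alpha_n$ exists), and the $b_n$'s together with admissibility handle completeness. Making your ``auxiliary corrections'' precise would, in effect, require you to rediscover this decoupled-channels structure; as written, the proposal does not establish the theorem.
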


Theorem \ref{theorem2_intro} can be formulated also for $k$-tuples $\T=(T_1,\dots,T_k)\in B(H)^k$, where none of the operators $T_1,\dots,T_k$ is of the form $\lambda I+K$ with $\lambda\in\CC$ and $K\in B(H)$ compact. However, we preferred to prove its simplified version. 

A technically more involved framework of operator tuples
will be addressed in Theorem \ref{theorem5_intro} below.
Under assumptions  stronger than in Theorem \ref{theorem2_intro},
we prove that large subsets of the whole of $A_T$  can be preassigned 
under size restrictions on the matrix elements similar to those in Theorem \ref{theorem2_intro}.
Note however that one requires additional restrictions on the diagonal elements, which reflects
a special role of the main diagonal in the matrix representations of $T,$ see e.g.  \cite{LW20} and \cite{MT} for more on the topic
of main diagonals.

For $\lambda=(\lambda_1,\dots,\lambda_k)\in\CC^k$ write $\|\lambda\|_\infty=\max\{|\lambda_1|,\dots,|\lambda_k|\}$.

\begin{theorem}\label{theorem5_intro}
Let 
$\T \in B(H)^k$ 
be such that $\Int W_e(\T)\ne\emptyset,$ and let $\e>0$ be fixed. 
Then there exists $\delta>0$ with the following property: if $B\subset(\NN\times\NN)\setminus\Delta$ is admissible, and 
$\{a_{nj}: (n,j)\in B\cup\Delta \}\subset \mathbb C^k$ satisfy :
\begin{itemize}
\item [(i)] $a_{nn}\in\Int W_e(\T)$ and $\dist\{a_{nn},\partial W_e(\T)\}>\e$ for all $n\in\NN$;
\item [(ii)] $\sum_{j:(n,j)\in B}\|a_{nj}\|_\infty\le\delta$
for all $n\in\NN$,
and $\sum_{n: (n,j)\in B}\|a_{jn}\|_\infty\le\delta$ for all $j\in\NN$, 
\end{itemize}
then there is an orthonormal basis $(u_n)_{n=1}^\infty$ in $H$ such that
$$ 
\langle \T u_j,u_n\rangle=a_{nj}
$$
for all $n,j\in\NN$ with $(n,j)\in B\cup\Delta$.
\end{theorem}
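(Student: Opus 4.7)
The plan is to build $(u_n)_{n=1}^\infty$ by induction, using the admissibility of $B$ to intersperse \emph{free steps} (at which no constraint from $B$ refers to the current index) among \emph{constrained steps}. Fix at the outset a dense sequence $(w_m)_{m=1}^\infty$ in $H$ whose inclusion in the spans $\mathrm{span}\{u_1,\dots,u_N\}$ is to be forced along the construction so that the resulting orthonormal system is actually a basis. Suppose $u_1,\dots,u_{n-1}$ have been chosen. Then $u_n$ must satisfy the linear conditions
\[
\langle u_n,u_j\rangle=0\ (j<n),\quad \langle u_n,T_i u_j\rangle=\overline{(a_{nj})_i}\ ((n,j)\in B),\quad \langle u_n,T_i^*u_j\rangle=\overline{(a_{jn})_i}\ ((j,n)\in B),
\]
together with the nonlinear ones $\|u_n\|=1$ and $\langle T_i u_n,u_n\rangle=(a_{nn})_i$ for $i=1,\dots,k$.

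At a free step $n$, only orthogonality to $u_1,\dots,u_{n-1}$, unit norm, and the diagonal value $a_{nn}$ remain. Here I would invoke the standard characterisation of $W_e(\T)$ as the set of limits $\lim_n \langle \T v_n,v_n\rangle$ along normalised sequences tending to zero weakly, to produce a unit vector orthogonal to $\mathrm{span}\{u_1,\dots,u_{n-1}\}$ realising $a_{nn}\in\Int W_e(\T)$; by convexly mixing with a suitable approximation, one can moreover arrange that a fixed $w_m$ lies, after this step, within controlled distance of $\mathrm{span}\{u_1,\dots,u_n\}$. Admissibility guarantees infinitely many such free $n$, so all $w_m$ are eventually captured.

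At a constrained step $n$, the prescribed off-diagonal values amount to finitely many inner products of $u_n$ with vectors of the form $T_i u_j$ and $T_i^* u_j$, whose total mass is controlled by $2k\delta$ thanks to hypothesis (ii). The main technical content is a perturbation lemma---a tuple/two-sided refinement of the building blocks of \cite{MT_JFA1}---of the following flavour: given a finite-dimensional subspace $M\subset H$, finitely many vectors $v_1,\dots,v_N\in H$ bounded in terms of $\|\T\|$, prescribed values $b_1,\dots,b_N\in\CC^k$ with $\sum_l\|b_l\|_\infty\le 2k\delta$, and a target $\la\in\Int W_e(\T)$ with $\dist(\la,\partial W_e(\T))>\e$, one can produce a unit vector $u\in M^\perp$ with $\langle u,v_l\rangle=b_l$ and $\langle \T u,u\rangle=\la$, provided $\delta=\delta(\e)$ is small enough. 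The idea is to start with a unit vector $u_0\in M^\perp$ almost orthogonal to the $v_l$'s and realising $\langle\T u_0,u_0\rangle=\la$ (produced by the free-step mechanism), then add a correction of norm $O(\delta)$ inside $M^\perp$ to install the prescribed inner products, and finally compensate the induced drift of the diagonal by a further tiny perturbation inside $M^\perp\cap\{v_l\}^\perp$, the $\e$-margin providing the required room.

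The main obstacle I anticipate is the simultaneous handling of three ingredients at a constrained step: (a) exact matching of $k$ diagonal entries (quadratic constraint), (b) exact matching of arbitrarily many prescribed off-diagonal entries (finite but potentially large linear system), and (c) progress toward basis completeness via approximants of $(w_m)$. Each is manageable in isolation, but their coupling requires a uniform budget: admissibility supplies enough free steps to accommodate (c); at each constrained step the number $N$ of linear constraints is unrestricted while their aggregate magnitude is $O(\delta)$, which is what keeps the perturbation argument uniform in $N$. Fixing $\delta$ only in terms of $\e$ (and the diameter of $W_e(\T)$, which controls the relevant norms modulo a harmless compact perturbation) should then close the induction and yield the desired basis.
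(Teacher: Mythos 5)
Your plan is to construct $u_n$ one at a time, at each constrained step solving a linear system
$\langle u_n, v_l\rangle = b_l$ (with $v_l$ among $T_i u_j, T_i^* u_j$ for earlier $j$) by a perturbation of norm $O(\delta)$, then fixing up the diagonal with the $\e$-margin. This breaks at the perturbation-lemma step, and the failure is structural, not technical. If you need $\langle u,v_l\rangle=b_l$ for $l=1,\dots,N$, the minimum-norm solution is governed by the Gram matrix of $(v_l)$, and its norm is \emph{not} controlled by $\sum_l|b_l|$ alone: when two of the $v_l$ are nearly parallel, even tiny prescribed values can force the solution to have norm $\gg 1$. For instance with $v_1=v$, $v_2=v+\epsilon w$ ($v\perp w$ unit vectors), the conditions $\langle u,v_1\rangle=0$, $\langle u,v_2\rangle=\delta$ force $\|u\|\ge\delta/\epsilon$. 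Since the $v_l$ here are images $T_iu_j$, $T_i^*u_j$ of previously constructed vectors, there is no reason they are well-conditioned, so your lemma ``produce a unit $u\in M^\perp$ with $\langle u,v_l\rangle=b_l$ and $\langle\T u,u\rangle=\lambda$, uniformly in $N$'' is simply false as stated. The aggregate size bound on $(b_l)$ does not rescue it.

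The paper's proof is organised precisely to avoid this obstruction, and it does so in a way that is genuinely different from your column-by-column induction. Rather than determining $u_n$ at step $n$, the paper writes each $u_n$ as an infinite sum of building blocks and reaches it only as a limit $u_n=\lim_{s\to\infty}u_{n,s}$. For every off-diagonal pair $(n,i)\in B$ with $n>i$ and every $t$, Lemma~\ref{lemma4} (the key technical device, absent from your plan) produces a quadruple $v_{ni}^{(t)},\tilde v_{ni}^{(t)},z_{ni}^{(t)},\tilde z_{ni}^{(t)}$ that is mutually $\perp^{(\T)}$ to all the other building blocks and satisfies $\langle T_t v_{ni}^{(t)},z_{ni}^{(t)}\rangle=1$, $\langle T_t^*\tilde v_{ni}^{(t)},\tilde z_{ni}^{(t)}\rangle=1$, with all other cross pairings zero. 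The $v$-piece is placed inside $u_i$ and the $z$-piece inside $u_n$ (and dually for the tildes), so that in $\langle T_t u_i,u_n\rangle$ the only surviving term is exactly $a_{ni}^{(t)}$; there is no Gram system to solve, because the needed cancellations are built into the vectors themselves. The unit-norm and diagonal constraints are then handled because the pieces are mutually $\perp^{(\T)}$: $\|u_n\|^2$ and $\langle\T u_n,u_n\rangle$ decompose into sums over pieces, and the ``leading'' piece $\alpha_n w_n$ is tuned (via a compensated target $\lambda_n\in\Int W_e(\T)$ still $\e/2$ from the boundary) to make both come out exactly. Admissibility enters only through the $b_n$-pieces used at the selected indices $n_s$ to drive basis completeness; that part of your outline is fine. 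To repair your proposal you would essentially have to reinvent this decomposition: give each $u_n$ enough internal ``slots'' so the cross inner products factor, rather than hoping a small correction can hit arbitrarily ill-conditioned linear constraints.
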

Theorem \ref{theorem5_intro} is the main result of this paper, and we are not aware of any
similar results in the literature. It is plausible that the assumption (i) in Theorem \ref{theorem5_intro}  can be replaced by a multivariate counterpart of the non-Blaschke type condition \eqref{non-bl} (with $\lambda_n$ replaced by $a_{nn}$).
However, the methods of the paper  lead to substantial technical complications on this way, and should probably be refined.

Clearly for any $m\in\NN$, the set $\{(n,j): 1\le|n-j|\le m\}$ is an admissible set. So in particular we can prescribe any finite number of diagonals in the matrix representation of $\T$
subject to mild restrictions on absolute values of their elements.
We formulate this conclusion as a separate statement generalising essentially \cite[Theorem 2.4]{MT_JFA1}.
\begin{corollary}\label{cor1}
Let 
$\T \in B(H)^k$
be such that $\Int W_e(\T)\ne\emptyset,$ and let $\e>0$ and $m \in \mathbb N$ be fixed. 
Then there exists $\delta=\delta(k, m, \e)>0$ such that 
if $\{a_{nj}: |n-j|\le m \}\subset \mathbb C^k$ satisfy :
\begin{itemize}
\item [(i)] $a_{nn}\in\Int W_e(\T)$ and $\dist\{a_{nn},\partial W_e(\T)\}>\e$ for all $n\in\NN$;
\item [(ii)] $\sup \{\|a_{nj}\|_\infty: 1\le |n-j| \le m \}\le\delta;$
\end{itemize}
then there is an orthonormal basis $(u_n)_{n=1}^\infty$ in $H$ such that
$$ 
\langle \T u_j,u_n\rangle=a_{nj}, \qquad  |n-j|\le m.
$$
\end{corollary}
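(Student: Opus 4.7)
The plan is to deduce the corollary directly from Theorem \ref{theorem5_intro} applied to the specific set
$$
B_m:=\{(n,j)\in\NN\times\NN: 1\le|n-j|\le m\},
$$
so the entire task reduces to verifying the hypotheses of that theorem for $B_m$ and to absorbing the combinatorial loss that comes from passing between sup-norm and $\ell^1$-norm bounds into the choice of $\delta$.

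First I would check that $B_m$ is admissible in the sense introduced before Theorem \ref{sparse}. Given any $m'\in\NN$, choose $n>m'+m$. Then for every $j\in\{1,\dots,m'\}$ one has $n-j>m$, which forces $(n,j)\notin B_m$ and $(j,n)\notin B_m$. Since $B_m\cap\Delta=\emptyset$ by definition, admissibility is established.

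Next I would translate the sup-bound in hypothesis (ii) of the corollary into the two row/column $\ell^1$-bounds required in hypothesis (ii) of Theorem \ref{theorem5_intro}. For each fixed $n\in\NN$ the set $\{j: (n,j)\in B_m\}$ is contained in $\{n-m,\dots,n+m\}\setminus\{n\}$, so it has cardinality at most $2m$; the symmetric bound holds after interchanging $n$ and $j$. Therefore
$$
\sum_{j:(n,j)\in B_m}\|a_{nj}\|_\infty\le 2m\cdot\sup\{\|a_{nj}\|_\infty:1\le|n-j|\le m\},
$$
and similarly for $\sum_{n:(n,j)\in B_m}\|a_{jn}\|_\infty$. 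Letting $\delta_0=\delta_0(\T,\e)>0$ denote the constant produced by Theorem \ref{theorem5_intro}, it therefore suffices to define $\delta:=\delta_0/(2m)$, which depends only on $\T$ (hence on $k$), on $\e$, and on $m$, as required.

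With these choices, hypothesis (i) of the corollary is identical to hypothesis (i) of Theorem \ref{theorem5_intro}, while the estimate above shows that hypothesis (ii) of the corollary implies hypothesis (ii) of Theorem \ref{theorem5_intro} with constant $\delta_0$. Invoking Theorem \ref{theorem5_intro} produces an orthonormal basis $(u_n)_{n=1}^\infty$ in $H$ satisfying $\langle\T u_j,u_n\rangle=a_{nj}$ for all $(n,j)\in B_m\cup\Delta$, i.e.\ for all $n,j$ with $|n-j|\le m$, which is exactly the conclusion. No new obstacle is present; the only mildly delicate point is the combinatorial observation on the bounded width of the rows and columns of $B_m$, and it is precisely this that forces the explicit dependence of $\delta$ on $m$.
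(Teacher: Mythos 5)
Your proof is correct and takes exactly the route the paper intends: the paper simply observes that the banded set $\{(n,j):1\le|n-j|\le m\}$ is admissible and presents the corollary as an immediate consequence of Theorem~\ref{theorem5_intro}; you have filled in the (routine) details, including the admissibility check and the $\ell^1$-versus-sup bookkeeping that yields $\delta=\delta_0/(2m)$.
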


The framework of operator tuples makes it possible to formulate similar  results 
for tuples of powers  $\T=(T, T^2,\dots, T^k)$ under the spectral assumptions on $T \in B(H)$ 
rather than the assumptions on $W_e(\T)$ as above,
making the obtained results more explicit.

If $T\in B(H)$ is such that $0\in\Int\hat\sigma(T),$ where $\hat \sigma(T)$ stands for the polynomial hull of $\sigma(T),$ then $0\in\Int W_e(T,T^2,\dots,T^k)$ for all $k\in \NN$ (see Section \ref{prelim}). So we can prescribe quite large subset of entries simultaneously for any finite number of powers $T^j$.


Similar results are also proved in the case of an invertible operator $T\in B(H)$ and for tuples consisting of both positive and negative powers of $T$.
In this case we impose a stronger assumption $r\mathbb T\cup s\mathbb T\subset\sigma_e(T)$, where $0<r<s$ and $\mathbb T$ stands for the unit circle.

We finish this section with a discussion of the notion of admissibility of arrays in 
$\mathbb N\times \mathbb  N,$ crucial for the rest of this paper. 
While the admissibility allows for rather arbitrary geometries of 
zero sets for sparse
matrix representations,  it is far from being necessary, in general.
Indeed, recall that $T \in B(H)$ is cyclic if and only if $T$ admits an upper ``triangular+$1$''
matrix representation $(a_{n,j})_{n,j=1}^\infty$  such that the diagonal $(a_{(j+1), j})_{j=1}^\infty$ right below the main diagonal has no nonzero entries,
see e.g. \cite[p. 285-286]{Halmos}. (A related representation for arbitrary $T \in B(H)$ can be found in \cite[Theorem 5]{Douglas}.)
Clearly, for such a representation  the 
 set of $(n,j)$ with $a_{nj}=0$ is not admissible.
Moreover, the admissibility fails for universal block three-diagonal representations for arbitrary $T \in B(H)$
discussed above.

On the other hand, the set of zeros in the matrix representation cannot in general be too ``small'', 
outside a band around the main diagonal.
Observe that if $T$ is a bounded normal operator on $H$ with absolutely continuous spectral measure, then
$T$ cannot be represented by a matrix with finite number of non-zero diagonals, see e.g. 
\cite[Theorem 3.9, (a)]{FoWu96} or  \cite[Corollary 4]{Shulman}.
However, the infinite number of non-zero diagonals does not, in general, exclude the admissibility 
of arrays corresponding to zero elements.
If $M$ is the multiplication operator on $L^2$ over the unit disc $\mathbb D$, given by
 $(Mf)(z)=zf(z), f \in L^2(\mathbb D),$ then $W_e(M)=\sigma_e(M)=\overline{\mathbb D},$ so
$M$ fits into each of the two examples mentioned above, and satisfies the extra assumptions
of our results on sparse representations.

The setting of  Theorems \ref{theorem2_intro} and \ref{theorem5_intro} providing prescribed arrays 
in matrix representations of bounded operators and their tuples is more demanding and quite new. Constraints on the given arrays there is still enigmatic and requires further research.
\section{Preliminaries and notations}\label{prelim}
\subsection{The relevance of numerical ranges}

First, we recall some standard notation used in the context of operator tuples.
For a $k$-tuple $\mathcal T=(T_1,\dots,T_k)\in B(H)^k, k \in \mathbb N,$ and $x,y\in H$ we write shortly
$$\langle \mathcal Tx,y\rangle= (\langle T_1x,y\rangle,\dots,\langle T_kx,y\rangle)\in\CC^k  \quad \text{and} \quad {\mathcal T}x=(T_1x,\dots,T_kx)\in H^k.$$
If $\la=(\la_1,\dots,\la_k)\in\CC^k$
we denote $\mathcal T-\la=(T_1-\la_1,\dots,T-\la_k)$ and
\begin{equation}\label{lam}
\|\lambda\|_\infty=\max\{|\lambda_1|,\dots,|\lambda_k|\}.
\end{equation}

In our studies of matrix representations for a bounded operator $T$ on
$H$ and more generally for operator tuples $\T \in B(H)^k$, we will rely on the well-studied  notions of the (joint) numerical range $W(\T),$ 
given by
$$
W(\mathcal T)=\bigl\{(\langle T_1 x, x\rangle , ..., \langle T_k x, x \rangle) : x \in H, \|x\|=1\bigr\},
$$
and of the essential numerical range $W_e(\T)$ of $\T.$
Being an approximate version of $W(\T),$ the latter notion allows for several equivalent definitions.
To fix one of them, for
$\mathcal T=(T_1,\dots,T_k)\in B(H)^k$ we define  the (joint) essential numerical range   $W_{e}(\mathcal T)$ of $\T$ as the set of all $k$-tuples
$\la=(\la_1,\dots,\la_k)\in\mathbb C^k$ such that there exists an orthonormal sequence $(u_n)_{n=1}^{\infty}\subset H$ with
$$
\lim_{n\to\infty}\langle  T_t u_n,  u_n\rangle=\la_t, \qquad t=1,\dots,k.
$$
Recall that $W_{e}(\mathcal T)$ is a nonempty, compact and, in contrast to $W(\mathcal T),$ \emph{convex} subset of $\overline{W(\mathcal T)}$, see  e.g. \cite{Li-Poon}. 
At the same time, $W(\mathcal T)$ is convex if $k=1,$ and it may be non-convex if $k >1.$
Moreover, even if $k=1,$ then it may be the case that $W(T)$ is neither closed nor open.

The next properties of $W_e(\T)$ and $W(\T)$ are crucial for the sequel
and will be used frequently.

\begin{lemma}\label{codim}
Let $\T \in B(H)^k.$
\begin{itemize}
\item [(a)]  One has $\lambda \in W_{e}(\T)$ if and only if
for every $\e>0$ and every  subspace $M\subset H$ of finite codimension
there is a unit vector $x\in M$ such that
$$\|\langle\T x,x\rangle-\lambda\|_{\infty}<\e.$$
\item [(b)]
If $\lambda \in\Int \, W_e(\mathcal T),$
then for every subspace $M\subset H$ of a finite codimension there is $x\in M$ such that $\|x\|=1$ and
$$
 \langle \T x, x\rangle=\lambda.
$$
\end{itemize}
\end{lemma}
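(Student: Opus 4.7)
For part (a), the forward implication is a projection argument: given an orthonormal sequence $(u_n)$ with $\langle T_t u_n,u_n\rangle\to\la_t$ for all $t$, and a finite-codim subspace $M\subset H$, the finite-dimensionality of $M^\perp$ forces $\|P_{M^\perp}u_n\|\to 0$, so the normalized projections $y_n:=P_Mu_n/\|P_Mu_n\|$ are unit vectors in $M$ with $\langle\T y_n,y_n\rangle-\langle\T u_n,u_n\rangle\to 0$, yielding the desired $x$. For the converse I would construct an orthonormal witnessing sequence inductively: given $u_1,\dots,u_{n-1}$, apply the hypothesis to the finite-codim subspace $\{u_1,\dots,u_{n-1}\}^\perp$ with tolerance $1/n$ to produce $u_n$.

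For part (b), the plan is to realize $\la$ as an \emph{exact} convex combination $\sum_i\mu_i\langle\T x_i,x_i\rangle$ along orthonormal $x_i\in M$ whose pairwise $\T$-cross terms all vanish. Since $\la\in\Int W_e(\T)\subset\CC^k\cong\mathbb R^{2k}$, I first fix $N+1$ points $\la^{(0)},\dots,\la^{(N)}\in W_e(\T)$ (with $N=2k$, or more, in general position) such that $\la$ lies in the relative interior of the simplex $S:=\conv\{\la^{(0)},\dots,\la^{(N)}\}$ with a definite slack $\rho:=\dist(\la,\partial S)>0$. Then for a small $\delta>0$ to be chosen, I build $x_0,\dots,x_N\in M$ by induction on $i$: apply part (a) to the subspace
$$
M_i:=M\cap\bigl\{x_j,\,T_tx_j,\,T_t^*x_j:\,j<i,\;t=1,\dots,k\bigr\}^\perp,
$$
which has finite codimension in $H$ since only finitely many linear conditions accumulate, and obtain a unit vector $x_i\in M_i$ with $\|\langle\T x_i,x_i\rangle-\la^{(i)}\|_\infty<\delta$.

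The orthogonality imposed at each step guarantees that $\langle T_t x_i,x_j\rangle=0$ for every $i\ne j$ and every $t$: if $j<i$ then $x_i\perp T_t^*x_j$ gives $\langle T_t x_i,x_j\rangle=0$, and if $j>i$ then $x_j\perp T_t x_i$ gives the same. Consequently, for any $x=\sum_{i=0}^N c_i x_i$ with $\sum_i|c_i|^2=1$, the cross terms collapse and
$$
\langle\T x,x\rangle=\sum_{i=0}^N|c_i|^2\langle\T x_i,x_i\rangle.
$$
Setting $\mu_i:=|c_i|^2$, this identifies the achievable values with the \emph{entire} convex hull of the points $\alpha_i:=\langle\T x_i,x_i\rangle$. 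Since $\|\alpha_i-\la^{(i)}\|_\infty<\delta$, for $\delta<\rho$ (in the appropriate metric) the perturbed simplex $\conv\{\alpha_0,\dots,\alpha_N\}$ still contains $\la$; selecting the corresponding nonnegative weights $\mu_i$ summing to $1$ and taking $c_i=\sqrt{\mu_i}$, the vector $x=\sum_i c_i x_i\in M$ is a unit vector with $\langle\T x,x\rangle=\la$ exactly.

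The main obstacle is the bookkeeping in the induction: one must choose the per-step tolerance $\delta$ small enough at the outset so that the final perturbation of the simplex $S$ still contains $\la$ in its convex hull, while simultaneously verifying that the orthogonality constraints grow only finitely at each step so that the subspaces $M_i$ remain of finite codimension and part~(a) continues to apply. Both are straightforward once the framework is set up, with the continuity/perturbation step relying on the positive slack $\rho$.
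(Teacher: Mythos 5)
The paper does not prove this lemma; it only cites \cite[Proposition 5.5]{MT_LMS} for part (a) and \cite[Corollary 4.5]{MT_JFA} for part (b), so there is no in-text argument to compare against. Your proof is correct and, as far as one can tell, uses the same mechanism as the cited source for (b). For (a), the forward direction is handled correctly via the finite rank of $P_{M^\perp}$ (weak convergence of the orthonormal sequence to $0$ then forces $\|P_{M^\perp}u_n\|\to 0$, so normalizing $P_Mu_n$ perturbs $\langle\T u_n,u_n\rangle$ by $o(1)$), and the converse is the standard inductive construction of a witnessing orthonormal sequence. For (b), the essential idea is exactly right: impose, at step $i$, orthogonality of $x_i$ to the finitely many vectors $x_j, T_tx_j, T_t^*x_j$ ($j<i$) so that all cross terms $\langle T_tx_i,x_j\rangle$ vanish, which turns $\langle\T x,x\rangle$ for $x=\sum c_ix_i$ into a \emph{genuine} convex combination $\sum|c_i|^2\langle\T x_i,x_i\rangle$ of the diagonal values; then use (a) to place those diagonals near a non-degenerate simplex with $\lambda$ in its interior, and close with the continuity of barycentric coordinates. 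This is precisely the device that allows one to bypass the failure of convexity of $W(\T)$ for $k>1$. The only point that should be stated more explicitly is the perturbation step: it is cleanest to argue via the map $\alpha\mapsto A(\alpha)^{-1}\binom{\lambda}{1}$ giving barycentric coordinates of $\lambda$ relative to $(\alpha_0,\dots,\alpha_{2k})$, which is continuous near the unperturbed simplex and has strictly positive entries there, so $\lambda\in\conv\{\alpha_i\}$ persists for $\delta$ small; appealing directly to $\dist(\lambda,\partial S)>\rho$ does not by itself show the perturbed simplex contains $\lambda$, since faces move by more than just translations, and the barycentric argument is the clean way to make the "positive slack" intuition rigorous.
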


The proof of the first property is easy and can be found e.g. in \cite[Proposition 5.5]{MT_LMS}.
Since $W(\T)$ is not in general convex, the second property is more involved,
see \cite[Corollary 4.5]{MT_JFA} for its proof and other related statements. 
The properties (a) and (b) are very useful in inductive constructions of 
sequences in $H.$ In particular,
by absorbing all of the elements constructed after a finite number of induction steps
into a finite-dimensional subspace $F$, 
one may still use $W_e(\T)$ when dealing with vectors from $F^\perp.$ 
The properties will play a similar role  in this paper.

To note another usage of numerical ranges, recall that the joint spectrum of a commuting tuple $\T=(T_1, \dots, T_k)$ can be defined as 
 the (Harte) spectrum of the $n$-tuple $(T_1,\dots,T_k)$ in the algebra $B(H).$
Similarly, the joint essential spectrum $\sigma_e(\mathcal T)$ is defined as the (Harte) spectrum of the $k$-tuple $(T_1+{\mathcal K}(H),\dots,T_k+{\mathcal K}(H))$ in the Calkin algebra $B(H)/{\mathcal K}(H)$, where ${\mathcal K}(H)$ denotes the ideal of all compact operators on $H$. One of the main features of $W(\T)$ and $W_e(\T)$ is that in view of the inclusions
$\sigma(\T)\subset \overline{W(\T)}$ and $\sigma_e(\T)\subset W_e(T)$ 
these numerical ranges help to localize spectrum. Sometimes, when the spectral information is more accessible, one may argue the other way round
and to identify big subsets of $W(T)$ and $W_e(T)$ in spectral terms. In particular, this becomes apparent for tuples $\T$ of special form
$\T=(T, \dots, T^k), T \in B(H).$ Note that $\sigma (\mathcal T)=\{(\lambda, \dots, \lambda^k): \lambda \in \sigma (T)\}$ and
$\sigma_{e} (\mathcal T)=\{(\lambda, \dots, \lambda^k): \lambda \in \sigma_e (T)\}$ (cf. Section \ref{section_powers})
so that the spectral properties of the tuple $(T, \dots, T^k)$ are determined by the spectral properties of $T$.
The relevance of spectrum for the study of numerical ranges can be illustrated by the next "numerical ranges" mapping theorem \cite[Theorem 4.6]{MT_JFA},
 important for the sequel (see Section \ref{section_powers}).
To formulate it, recall that if $K\subset\CC$ is compact, then the polynomial hull 
$\widehat K:=\{\lambda \in \mathbb C: |p(\lambda)|\le \sup_{z\in K}|p(z)| \,\, \text{for all polynomials}\, p\}$ of $K$ can be described 
as the union of $K$ with all bounded components of the complement $\mathbb C\setminus K$.
 If ${\rm conv}\, K$ stands for the convex hull
of $K,$ then clearly $\widehat K \subset {\rm conv}\, K$. 

\begin{theorem}\label{spectrum_polyn}
Let $T\in B(H).$ If  $\la 
\in \Int \widehat \sigma (T),$   
then
$$
(\la,\la^2,\dots,\la^k)\in \conv\,\{(z,z^2,\dots,z^k):z\in\sigma_e(T)\}\subset \Int W_{e}(T, T^2, \dots, T^k)
$$
for all $k\in\NN.$
\end{theorem}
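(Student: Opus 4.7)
The plan is to combine a spectral-mapping reduction, a topological containment of polynomial hulls, and an averaging argument on a small circle around $\lambda$. First, I would invoke the polynomial spectral mapping theorem for the Harte spectrum in the Calkin algebra to get $\sigma_e(\T)=\{(z,z^2,\dots,z^k):z\in\sigma_e(T)\}$; combined with the general inclusion $\sigma_e(\T)\subset W_e(\T)$ and the convexity of $W_e(\T)$, this immediately yields $\conv\{(z,z^2,\dots,z^k):z\in\sigma_e(T)\}\subset W_e(\T)$.

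The heart of the argument is the topological step $\Int\widehat\sigma(T)\subset\widehat{\sigma_e(T)}$, which I would prove by contradiction. Suppose $\mu\in\Int\widehat\sigma(T)$ lies in the unbounded component $V:=\CC\setminus\widehat{\sigma_e(T)}$ of $\CC\setminus\sigma_e(T)$. Throughout $V$ the operator $T-zI$ is Fredholm of index $0$, so $V\cap\sigma(T)$ consists of at most countably many isolated eigenvalues. Because $\mu$ cannot be isolated in $\widehat\sigma(T)$, it must either belong to a bounded component $\Omega$ of $\CC\setminus\sigma(T)$ (if $\mu\notin\sigma(T)$) or, if $\mu$ is an isolated eigenvalue in $V$, sit on the boundary of such an $\Omega$ with $B(\mu,\epsilon)\setminus\{\mu\}\subset\Omega$ for some small $\epsilon$. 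In either case $\Omega$ is connected and meets $V$, hence $\Omega\subset V$; but then the countable set $\partial\Omega\cap V\subset V\cap\sigma(T)$ would disconnect $V$ into the nonempty open pieces $\Omega$ and $V\setminus\overline\Omega$, contradicting the elementary fact that a countable subset cannot separate a plane domain.

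Once $\lambda\in\widehat{\sigma_e(T)}$ is established, a classical polynomial-convexity construction furnishes a probability measure $\nu$ supported on $\sigma_e(T)$ with $p(\lambda)=\int p\,d\nu$ for every polynomial $p$: take $\nu=\delta_\lambda$ if $\lambda\in\sigma_e(T)$, and the harmonic measure on $\partial\Omega'\subset\sigma_e(T)$ otherwise, where $\Omega'$ is the bounded component of $\CC\setminus\sigma_e(T)$ containing $\lambda$. Specializing to $p_j(z)=z^j$ expresses $(\lambda,\lambda^2,\dots,\lambda^k)=\int(z,z^2,\dots,z^k)\,d\nu(z)$ as an element of $\conv\{(z,z^2,\dots,z^k):z\in\sigma_e(T)\}$.

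To conclude $(\lambda,\dots,\lambda^k)\in\Int W_e(\T)$, I would choose $r>0$ with $\overline{D(\lambda,r)}\subset\Int\widehat\sigma(T)$ and apply the mean value identity $\lambda^j=\frac{1}{2\pi}\int_0^{2\pi}(\lambda+re^{i\theta})^j\,d\theta$ to realize $(\lambda,\dots,\lambda^k)$ as the barycenter of the uniform probability measure on the moment circle $M_r=\{(\zeta,\dots,\zeta^k):|\zeta-\lambda|=r\}$, a subset of $\conv\{(z,\dots,z^k):z\in\sigma_e(T)\}$ by the previous step. The substitution $\zeta=\lambda+rw$ identifies $\conv M_r$ with the image of $\conv\{(w,w^2,\dots,w^k):|w|=1\}$ under the invertible affine map whose linear part is lower triangular with diagonal $r,r^2,\dots,r^k$ in the monomial basis; since the convex hull of the trigonometric moment curve is a full-dimensional centrally symmetric convex body in $\CC^k$ containing $0$ as an interior point (seen from linear independence of $\{e^{ij\theta}\}_{j=1}^{k}$ and rotation symmetry), transporting back places $(\lambda,\dots,\lambda^k)$ in $\Int(\conv M_r)\subset\Int W_e(\T)$. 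The main obstacle is the topological containment in paragraph two; the remaining steps are relatively standard once it is in place.
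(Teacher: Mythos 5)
The paper itself does not prove this theorem; it is quoted from \cite[Theorem 4.6]{MT_JFA}, so there is no in-text argument to compare yours against. On its own terms, your proposal is a correct proof of the intended assertion, namely that for $\la\in\Int\widehat\sigma(T)$ one has $(\la,\dots,\la^k)\in\conv\{(z,\dots,z^k):z\in\sigma_e(T)\}$ and $(\la,\dots,\la^k)\in\Int W_e(T,\dots,T^k)$. (Note the chain as printed, $\conv\{(z,\dots,z^k):z\in\sigma_e(T)\}\subset\Int W_e(\T)$, is not literally true in general: for $T$ the unilateral shift and $k=1$ one has $\conv\sigma_e(T)=\overline{\mathbb D}=W_e(T)$, whose interior is the open disk; the two-membership reading, which is what you prove and what the paper actually uses in Section 6, is the intended one.) The four ingredients you assemble --- the Harte spectral mapping theorem identifying $\sigma_e(\T)$; the topological inclusion $\Int\widehat\sigma(T)\subset\widehat{\sigma_e(T)}$ via Fredholm theory together with the fact that a countable set cannot disconnect a plane domain; a representing probability measure on $\sigma_e(T)$ for $p\mapsto p(\la)$; and the circle average combined with full-dimensionality of the convex hull of the trigonometric moment curve to obtain interiority --- are all sound and fit together correctly. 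Two small remarks. First, the representing-measure step is cleaner if obtained from Hahn--Banach and Riesz representation applied to the norm-one functional $p\mapsto p(\la)$ on the polynomial subspace of $C(\sigma_e(T))$; this sidesteps regularity questions about $\partial\Omega'$ that harmonic measure raises. Second, $\conv\{(w,\dots,w^k):|w|=1\}$ is \emph{not} centrally symmetric for $k\ge2$ (replacing $w$ by $-w$ flips only the odd coordinates), but you do not need that: the identity $\frac{1}{2\pi}\int_0^{2\pi}e^{ij\theta}\,d\theta=0$ makes $0$ the barycenter, and full-dimensionality (from linear independence of $e^{ij\theta}$, $1\le j\le k$) already forces the barycenter of a compact convex body to lie in its interior.
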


More information on joint essential numerical range for operator tuples and its relation to spectral theory can be found in \cite{MT_LMS} and \cite{MT},
see also \cite{Li-Poon}. The classical case $k=1$ is considered in detail in \cite{Bonsall} and \cite{Fillmore}.
\subsection{Some notations}

Let $T\in B(H)$ and $u,v\in H$. We write for short $u\perp^{(T)}v$ if $u\perp v, Tv, T^*v$. More generally, if $\T=(T_1,\dots,T_k)\in B(H)^k$ then we write $u\perp^{(\T)}v$ if
$$
u\perp v, T_1v,\dots,T_kv,T_1^*v,\dots,T^*_kv.
$$
Clearly $u\perp^{(\T)}v$ if and only if $v\perp^{(\T)} u$. Note that if $\T=(T_1,\dots,T_k)\in B(H)^k$ and $v_1,\dots,v_m\in H$ then the set
$$
\{u\in H: u\perp^{(\T)} v_1,\dots,v_m\}
$$
is a subspace of $H$ of finite codimension.

For a subspace $L\subset H$ denote by $P_L$ the orthogonal projection onto $L$.

To simplify our presentation, we do not distinguish between $0$ and $(0,\dots, 0)$
and write just $0$ whenever the vector notation is formally needed.

As above, for a compact subset $K \subset \mathbb C^n$ we denote by $\Int K$ the interior of $K,$  by $\partial K$ the topological boundary of $K,$
by $\conv K$ the convex hull of $K$ and by $\widehat K$ the polynomial hull of $K.$

\section{Sparse representations}
First we prove Theorem \ref{sparse} that any tuple of operators has a very sparse matrix representation.
\bigskip

\begin{proof}[Proof of Theorem \ref{sparse}.]
Let $\T=(T_1,\dots,T_k)\in B(H)^k$ be any $k$-tuple of operators and $B\subset(\NN\times\NN)\setminus\Delta$ an admissible set.
We show that there exists an orthonormal basis $(u_n)_{n=1}^\infty$ in $H$ such that
$$
\langle \T u_j,u_n\rangle=0
$$
for all $(n,j)\in B$. 

Let $n_0=0$ and construct inductively an increasing sequence of integers $(n_s)_{s=0}^\infty$ such that
$$
(j,n_s)\notin B\qquad\hbox{and}\qquad(n_s,j)\notin B,\qquad j=1,\dots,n_{s-1}.
$$
Let $(y_r)_{r=1}^\infty$ be a sequence of vectors in $H$ such that $\bigvee_{r=1}^\infty y_r=H$.

We construct the vectors $u_n$ inductively.

Let $s\ge 1$ and suppose that orthonormal vectors $u_1,\dots,u_{n_{s-1}}\in H$ satisfy
\begin{itemize}
\item[(i)] $\langle T_tu_n,u_j\rangle=0$ for all $t=1,\dots,k$ and all $(n,j)\in B$ with $n,j\le n_{s-1}$;
\item[(ii)] $y_r\in\bigvee_{n=1}^{n_{r}}u_n$ for all $r\le s-1$.
\end{itemize}

For all $n,\ \ n_{s-1}<n<n_s,$ find inductively unit vectors $u_n$ such that
$$
u_n\perp^{(\T)}u_1,\dots,u_{n-1},y_s.
$$
Then $\langle\T u_n,u_m\rangle=\langle\T u_m,u_n\rangle=0$ for all $m\le n-1$, $n_{s-1}<n<n_s$.

In order to construct $u_{n_s}$ we distinguish two cases. If $y_s\in\bigvee_{j=1}^{n_{s-1}}u_j$ then choose $u_{n_s}$ any unit vector satisfying
$$
u_{n_s}\perp^{(\T)}u_1,\dots,u_{n_s-1}.
$$
If $y_s\notin\bigvee_{j=1}^{n_{s-1}}u_j$ then set
$$
u_{n_s}=\frac{(I-P_{M_{n_{s-1}}})y_s}{\|(I-P_{M_{n_{s-1}}})y_s\|},
$$
where $P_{M_{n_{s-1}}}$ is the orthogonal projection onto the subspace $M_{n_{s-1}}:=\bigvee_{j=1}^{n_{s-1}}u_j$. Clearly 
$\|u_{n_s}\|=1$ and $u_{n_s}\perp u_1,\dots,u_{n_{s-1}}$. Moreover, 
$$
u_{n_s}\in\bigvee\{y_s,u_1,\dots,u_{n_{s-1}}\}\perp^{(\T)} u_m
$$
for all $m, n_{s-1}<m<n_s$ by the construction. So the set $\{u_1,\dots,u_{n_s}\}$ is orthonormal.

If $m<n_s$ and either $(m,n_s)\in B$ or $(n_s,m)\in B$ then $m>n_{s-1}$ and
$$
T_t u_{n_s}\in\bigvee\{T_t y_s,T_t u_1,\dots,T_t u_{n_{s-1}}\}\subset u_m^\perp
$$
for all $t=1,\dots,k$.
So
$\langle \T u_{n_s},u_m\rangle=0$.
Similarly, $\langle\T u_m,u_{n_s}\rangle=0$.

Moreover, we have $y_s\in\bigvee_{j=1}^{n_s}u_j$.

If we construct the vectors $u_n, n\in\NN,$ in this way then they will form an orthonormal system satisfying
$$
\langle \T u_j,u_n\rangle=0
$$
for all $(n,j)\in B$. Moreover, $y_r\in\bigvee_{n=1}^\infty u_n$ for all $r$, and so $(u_n)_{n=1}^\infty$ form an orthonormal basis.

\end{proof}

As mentioned in the introduction, 
the assumption that $B\subset(\NN\times\NN)\setminus\Delta$ cannot be in general omitted. In general, 
all entries on the main diagonal may be non-zero for any choice of an orthonormal basis if $0\not \in W(T)$
(e.g. if ${\rm Re}\, T\ge c I, c >0$).

If we assume that $0\in\Int W_e(\T)$ then it is possible to obtain also the zero main diagonal.
The next result is a consequence of a more general Theorem \ref{theorem5_intro}. However, we give a direct proof because it is much simpler
and, at the same time, contains all of the main ideas behind the proof of Theorem \ref{theorem5_intro}.

\begin{theorem} \label{sparse2}
Let 
$\T \in B(H)^k$ 
satisfy $0\in\Int W_e(\T)$  
and let $B\subset (\NN\times\NN)\setminus\Delta$ be an admissible set. Then there exists an orthonormal basis $(u_n)_{n=1}^\infty\subset H$ such that
\begin{equation}\label{sp}
\langle \T u_j,u_n\rangle=0
\end{equation}
for $(n,j)\in B\cup\Delta$. 
\end{theorem}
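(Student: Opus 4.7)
The plan is to mimic the inductive skeleton of Theorem \ref{sparse} but additionally to enforce $\langle\T u_n,u_n\rangle=0$ at every index, and to replace each single ``inclusion'' vector by a short \emph{block} of vectors that together capture $y_s$ in the span up to exponentially small error. I fix a dense sequence $(y_r)_{r=1}^\infty$ in $H$, a radius $r_0>0$ with $B(0,r_0)\subset W_e(\T)$, and $\alpha\in(0,1)$ small enough that $\alpha^2(1-\alpha^2)^{-1}\max_t\|T_t\|<r_0$. At step $s$, iteratively applying the admissibility of $B$, I produce indices $n_{s-1}<n_s^{(1)}<\cdots<n_s^{(K_s)}=n_s$ with $(j,n_s^{(i)}),(n_s^{(i)},j)\notin B$ for every $j\le n_s^{(i-1)}$, where $K_s$ is chosen so large that $(1-\alpha^2)^{K_s/2}\|y_s\|<2^{-s}$; this automatically renders the block indices pairwise $B$-free.

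For each \emph{intermediate} index $n$ (one not equal to any $n_s^{(i)}$) I would use Lemma \ref{codim}(b) to choose $u_n$ of unit norm with $\langle\T u_n,u_n\rangle=0$ inside the finite-codimensional subspace $\{v\in H:v\perp^{(\T)} u_j\ \text{for all}\ j<n,\ v\perp^{(\T)} y_s\}$. The strong $\perp^{(\T)}$ relation ensures that every $B$-relation involving $n$ and any later-constructed vector holds automatically.

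For the block, set $\hat v_1:=(y_s-Py_s)/\|y_s-Py_s\|$ where $P$ is the orthogonal projection onto $\bigvee\{u_j:j\le n_{s-1}\}$, and let $\hat v_i$ denote the unit residual of $\hat v_1$ after removing its components along $u_{n_s^{(1)}},\dots,u_{n_s^{(i-1)}}$; it satisfies the recursion $\hat v_{i+1}=\sqrt{1-\alpha^2}\,\hat v_i-\alpha w_i$. By Lemma \ref{codim}(b), choose a unit vector $w_i$ in the finite-codimensional subspace orthogonal to $\hat v_i,T_t\hat v_i,T_t^*\hat v_i$ and to all previously built $u_j,w_j$ together with their $T_t$- and $T_t^*$-images, satisfying
\[
\langle\T w_i,w_i\rangle=-\frac{\alpha^2}{1-\alpha^2}\langle\T\hat v_i,\hat v_i\rangle;
\]
the right-hand side lies in $B(0,r_0)\subset\Int W_e(\T)$ by the choice of $\alpha$. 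Then set $u_{n_s^{(i)}}:=\alpha\hat v_i+\sqrt{1-\alpha^2}\,w_i$. The two cross-terms vanish thanks to $w_i\perp T_t\hat v_i,T_t^*\hat v_i$, so
\[
\langle\T u_{n_s^{(i)}},u_{n_s^{(i)}}\rangle=\alpha^2\langle\T\hat v_i,\hat v_i\rangle+(1-\alpha^2)\langle\T w_i,w_i\rangle=0.
\]

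The recursion for $\hat v_{i+1}$ implies $\|y_s-P_{\bigvee\{u_j:j\le n_s\}}y_s\|=\|y_s-Py_s\|\,(1-\alpha^2)^{K_s/2}<2^{-s}$, so density of $(y_r)_r$ makes $(u_n)_{n=1}^\infty$ an orthonormal basis of $H$. The $B$-relations for block--block pairs follow from the admissibility choice of the $n_s^{(i)}$'s; for block--intermediate pairs they follow because $T_tu_{n_s^{(i)}}\in\bigvee\{T_t\hat v_1,T_tw_1,\dots,T_tw_i\}$ and each summand is orthogonal to every intermediate $u_m$ by our orthogonality choices. The main technical obstacle I expect is the careful bookkeeping required to verify that all the orthogonality constraints defining where $w_i$ is chosen still leave a subspace of finite codimension (so that Lemma \ref{codim}(b) applies), and that $\hat v_i$ remains orthogonal to every intermediate $u_m$; both are straightforward once one notices that intermediates are $\perp^{(\T)} y_s$ (hence $\perp\hat v_1$) and that the recursion $\hat v_{i+1}=\sqrt{1-\alpha^2}\,\hat v_i-\alpha w_i$ propagates this orthogonality automatically because $w_i$ was chosen $\perp u_m$.
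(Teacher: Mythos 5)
Your proof is correct and uses the same core trick as the paper: at each ``capturing'' index, write the basis vector as a convex mixture of a residual-direction vector and a correction vector whose diagonal is chosen, via Lemma \ref{codim}(b), to cancel the diagonal contribution of the residual. The difference is in the bookkeeping of completeness. The paper allots a \emph{single} index $n_s$ per step $s$, but revisits each target $y_r$ infinitely often by the usual $s=2^r(2l-1)$ scheme, so that $\dist(y_r,M_{n_{2^r(2l-1)}})^2 \le (1-\eta)^l\to 0$ and $y_r$ lands exactly in $\overline{\bigvee_n u_n}$. You instead process each $y_s$ in a single step but spend a finite \emph{block} of $K_s$ indices on it, choosing $K_s$ so large that the residual is already below $2^{-s}$; completeness then follows from density rather than from exact inclusion. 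Both work; the paper's version is a bit more uniform (no step-dependent $K_s$, and the conclusion $y_r\in\overline{\bigvee u_n}$ is immediate), while yours avoids interleaving the targets.

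Two small points worth repairing. First, you do not treat the degenerate case $y_s\in\bigvee\{u_j:j\le n_{s-1}\}$, where $\hat v_1$ is undefined; the paper dispatches it by simply taking an arbitrary admissible $u_{n_s}$ with zero diagonal (and the same fix works here, collapsing the whole block to intermediate vectors). Second, your phrase ``$w_i$ was chosen $\perp u_m$'' should really say $\perp^{(\T)} u_m$: what you need, and what your construction actually delivers via ``orthogonal to all previously built $u_j,w_j$ together with their $T_t$- and $T_t^*$-images,'' is the full $\T$-orthogonality, since the $B$-entries you must kill involve $T_t u_m$ and $T_t^*u_m$, not only $u_m$. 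Once one notes that $\bigvee\{\hat v_1,w_1,\dots,w_{i-1}\}=\bigvee\{\hat v_1,u_{n_s^{(1)}},\dots,u_{n_s^{(i-1)}}\}$ and that $\perp^{(\T)}$ is linear in the right argument, the propagation you assert does indeed go through, but as stated the proposal elides the distinction.
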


\begin{proof}
Let $\T=(T_1,\dots,T_k)\in B(H)^k.$
Without loss of generality we may assume that $\|T_t\|\le 1$ for all $t=1,\dots,k$.

Let $n_0=0$ and construct inductively an increasing sequence of integers $(n_s)_{s=0}^\infty$ such that
$$
(j,n_s)\notin B\qquad\hbox{and}\qquad(n_s,j)\notin B,\qquad j=1,\dots,n_{s-1}.
$$

Fix a number $\eta\in(0,1)$ such that
$$
\frac{\eta}{1-\eta}<\dist\{0,\partial W_e(\T)\}.
$$
Fix a sequence of unit vectors $(y_r)_{r=0}^\infty$ in $H$ such that $\bigvee_{r=0}^\infty y_r=H$.

Each $s\in\NN$ can be written as $s=2^{r(s)}(2l(s)-1)$ where $r(s)\ge 0$ and $l(s)\ge 1$ are uniquely determined integers.

We construct the vectors $u_n$ inductively.

Let $s\ge 1$ and suppose that orthonormal vectors $u_1,\dots,u_{n_{s-1}}\in H$ satisfying
\begin{itemize}
\item[(i)] $\langle \T u_j,u_j\rangle=0$ for all $j=1,\dots,n_{s-1}$;
\item[(ii)] $\langle T_tu_n,u_j\rangle=0$ for all $t=1,\dots,k$ and all $(n,j)\in B$ with $n\ne j$ and $n,j\le n_{s-1}$;
\item[(iii)] $\dist^2\Bigl\{y_r,\bigvee_{j=1}^{n_{2^r(2l-1)}}u_j\Bigr\}\le (1-\eta)^l$ for all $r,l$ with $2^r(2l-1)\le s-1$.
\end{itemize}

For $n,\ \ n_{s-1}<n<n_s,$ using Lemma \ref{codim}, (b), find inductively unit vectors $u_n$ such that
$$
u_n\perp^{(\T)}u_1,\dots,u_{n-1},y_{r(s)}
$$
and
$$
\langle\T u_n,u_n\rangle =0.
$$
Then $\langle\T u_n,u_m\rangle=\langle\T u_m,u_n\rangle=0$ for all $m\le n$, $n_{s-1}<n<n_s$.

In order to construct $u_{n_s}$ we distinguish two cases. If $y_{r(s)}\in\bigvee_{j=1}^{n_{s-1}}u_j$ then let $u_{n_s}$ be any unit vector
satisfying
$$
u_{n_s}\perp^{(\T)}u_1,\dots,u_{n_s-1}
$$
and
$$
\langle\T u_{n_s},u_{n_s}\rangle =0.
$$
Then clearly (i)--(iii) are satisfied.

If $y_{r(s)}\notin\bigvee_{j=1}^{n_{s-1}}u_j$ then set
$$
b_{n_s}=\frac{(I-P_{M_{n_{s-1}}})y_{r(s)}}{\|(I-P_{M_{n_{s-1}}})y_{r(s)}\|},
$$
where $P_{M_{n_{s-1}}}$ is the orthogonal projection onto the subspace $M_{n_{s-1}}:=\bigvee_{j=1}^{n_{s-1}}u_j$.
We have 
$$
\Bigl|\frac{\eta}{1-\eta}\langle \T b_{n_s},b_{n_s}\rangle\Bigr|<\dist\{0,\partial W_e(\T)\},
$$
so by Lemma \ref{codim}, (b) there exists a unit vector $v_{n_s}\in H$ such that
$$
v_{n_s}\perp^{(\T)}u_1,\dots,u_{n-1},b_{n_s}
$$
and
$$
\langle\T v_{n_s},v_{n_s}\rangle= -\frac{\eta}{1-\eta}\langle \T b_{n_s},b_{n_s}\rangle.
$$
Define
$$
u_{n_s}=\sqrt{1-\eta}\,v_{n_s}+\sqrt{\eta} \,b_{n_s}.
$$
Clearly $\|u_{n_s}\|=1$ since $v_{n_s}\perp b_{n_s}$.

Clearly $u_{n_s}\perp u_1,\dots,u_{n_{s-1}}$. For $j, n_{s-1}<j<n_s,$ we have
$$
\langle u_{n_s},u_j\rangle= \langle \sqrt \eta b_{n_s},u_j\rangle=0
$$
since $b_{n_s}\in\bigvee\{y_{r(s)},u_1,\dots,u_{n_{s-1}}\}\subset u_j^\perp.$
So the vectors $u_1,\dots,u_{n_s}$ are orthonormal.

We have
$$
\langle \T u_{n_s},u_{n_s}\rangle=(1-\eta)\langle \T v_{n_s},v_{n_s}\rangle+\eta \langle \T b_{n_s},b_{n_s}\rangle=0.
$$
If $j< n_s$ and $(n_s,j)\in B$ then $j>n_{s-1}$ and $\langle\T u_j,u_{n_s}\rangle=
\langle \T u_j, \sqrt \eta b_{n_s}\rangle=0$
since
$$
b_{n_s}\in\bigvee\{y_{r(s)},u_1,\dots,u_{n_{s-1}}\}\subset\perp^{(\T)} u_j.
$$
Similarly, $\langle\T u_{n_s},u_j\rangle=0$ if $j< n_s$ and $(j,n_s)\in B$.

Finally,
\begin{align*}
\dist^2\{y_{r(s)}, M_{n_s}\}=&
\dist^2\{y_{r(s)},M_{n_s-1}\}-|\langle y_{r(s)},u_{n_s}\rangle|^2\\
\le&
\dist^2\{y_{r(s)},M_{n_{s-1}}\}-|\langle y_{r(s)},\sqrt \eta b_{n_s}\rangle|^2\\
=&
\dist^2\{y_{r(s)},M_{n_{s-1}}\}(1-\eta)
\le
(1-\eta)^{l(s)}
\end{align*}
by the induction assumption.

Suppose that the vectors $u_n, n\in\NN,$ have been constructed in the way described above. Then the vectors $(u_n)_{n\in\NN}$ form an orthonormal system satisfying
$$
\langle \T u_j,u_n\rangle=0
$$
for all $(n,j)\in B\cup\Delta$. Moreover, for each $r\ge 0$ we have
$$
\dist^2\Bigl\{y_r,\bigvee_{j=1}^\infty u_j\Bigr\}=
\lim_{l\to\infty}\dist^2\bigl\{y_r, M_{2^r(2l-1)}\bigr\}
\le
\lim_{l\to\infty}(1-\eta)^l=0.
$$
So $y_r\in\bigvee_{j=1}^\infty u_j$. Since $\bigvee_{r=0}^\infty y_r=H$, the vectors $(u_n)_{n=1}^\infty$ form an orthonormal basis.
\end{proof}

Theorem \ref{sparse2} implies that operators $T\in B(H)$ with $0\in\Int W_e(T)$ have extremely sparse representations
as stated in Theorem \ref{sparse1} given in the introduction.

\begin{proof}[Proof of Theorem \ref{sparse1}] 
Let $\T=(T_1,\dots,T_k)\in B(H)^k.$
Without loss of generality we may assume that the function $f$ is nondecreasing (if not, then replace $f(m)$ by $\inf\{f(j):j\ge m\}$).

Find an increasing sequence $(n_k)_{k=1}^\infty$ such that $f(n_k)\ge (k+1)^2$ for each $k\in\NN$.

Let 
$$
B=\NN\times\NN\setminus \bigl\{(n_k,j), (j,n_k): k\in\NN, j\le k\bigr\}.
$$
For each $m$, $n_k\le m< n_{k+1}$, we have
$$
\card\bigl\{(n,j): n,j\le m, (n,j)\notin B\bigr\}\le \sum_{r=1}^k (2r)\le (k+1)^2\le f(n_k)\le f(m).
$$
Clearly $B\setminus\Delta$ is an admissible set. By Theorem \ref{sparse2}, there exists an orthonormal basis $(u_n)_{n=1}^\infty$ such that $\langle \T u_j,u_n\rangle=0$ for all $(n,j)\in B$.
\end{proof}

Clearly the condition $f(m)\to\infty$ as $m \to \infty$ is in general necessary. It is easy to see that there exists a matrix representation of $T\in B(H)$ with $\card\{(n,j): \langle Tu_n,u_j\rangle\ne 0\}<\infty$ if an only if $T$ is a finite rank operator.

If we do not assume that $0\in\Int W_e(\T)$ then in general all entries on the main diagonal may be non-zero for all orthonormal bases.  So we can state the next version of Theorem \ref{sparse1} (having the same proof).

\begin{theorem}\label{sparse3}
Let 
$\T \in B(H)^k.$ 
Let $f:\NN\to\NN$ be any function satisfying $\lim_{m\to\infty}f(m)=\infty$. Then there exists an orthonormal basis $(u_n)_{n=1}^\infty$ in $H$ such that
$$
\card\bigl\{(n,j)\in\NN\times\NN: n,j\le m,\, \langle \T u_j,u_n\rangle\ne 0\bigr\}\le m+f(m)
$$
for all $m\in\NN$.
\end{theorem}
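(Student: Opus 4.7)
The plan is to imitate the proof of Theorem \ref{sparse1}, replacing the appeal to Theorem \ref{sparse2} (which required $0\in\Int W_e(\T)$ and therefore delivered zeros on the main diagonal too) by an appeal to Theorem \ref{sparse} (which has no spectral hypothesis but controls only off-diagonal entries). The loss is precisely that the diagonal entries are no longer forced to vanish, and this is exactly why the bound on the right-hand side must be weakened from $f(m)$ to $m+f(m)$: the diagonal contributes at most $m$ potentially non-zero entries to the $m\times m$ upper-left block of the matrix.

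Concretely, I would first reduce to the case where $f$ is nondecreasing by replacing $f(m)$ with $\inf\{f(j):j\ge m\}$; this preserves $f(m)\to\infty$ and only decreases $f$, so any basis that works for the new $f$ works for the old one. Next I would choose an increasing sequence $(n_k)_{k=1}^\infty\subset\NN$ with $f(n_k)\ge(k+1)^2$, and put
$$
B=\bigl((\NN\times\NN)\setminus\Delta\bigr)\setminus\bigl\{(n_k,j),(j,n_k):k\in\NN,\ j\le k\bigr\}.
$$
This set is admissible: given $m\in\NN$, any $n_k$ with $k\ge m$ satisfies $(n_k,j)\notin B$ and $(j,n_k)\notin B$ for all $j\le m\le k$, as required by the definition.

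Theorem \ref{sparse} then supplies an orthonormal basis $(u_n)_{n=1}^\infty$ with $\langle \T u_j,u_n\rangle=0$ for every $(n,j)\in B$. It remains to count the remaining pairs in the $m\times m$ corner. For $n_k\le m<n_{k+1}$, every pair $(n,j)$ with $n,j\le m$ at which $\langle\T u_j,u_n\rangle$ can be non-zero must lie in $(\NN\times\NN)\setminus B$. The diagonal $\{(n,n):n\le m\}$ contributes $m$ such pairs, and the off-diagonal exceptions contribute at most
$$
\sum_{r=1}^{k}2r \;=\; k(k+1)\;\le\;(k+1)^2\;\le\;f(n_k)\;\le\;f(m).
$$
Summing the two bounds yields $m+f(m)$, as required.

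There is no substantive obstacle here; once one realizes that Theorem \ref{sparse} is available with no assumption on $\T$, the argument is purely combinatorial bookkeeping, and the $+m$ term transparently records the absence of diagonal control. The only mildly delicate point is choosing the growth rate $(k+1)^2$ of $f(n_k)$ so that the quadratic count $\sum_{r=1}^{k}2r$ of off-diagonal exceptions fits under $f(m)$ uniformly in $m\in[n_k,n_{k+1})$.
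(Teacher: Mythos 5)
Your proof is correct and follows exactly the route the paper intends: the authors remark that Theorem \ref{sparse3} has ``the same proof'' as Theorem \ref{sparse1}, namely one replaces the appeal to Theorem \ref{sparse2} by an appeal to Theorem \ref{sparse}, loses control of the diagonal, and accounts for that loss by adding $m$ to the count. Your choice of $B$, the admissibility check, and the bound $\sum_{r=1}^{k}2r=k(k+1)\le(k+1)^2\le f(n_k)\le f(m)$ all match the paper's argument.
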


Theorem \ref{sparse2} applies directly to $k$-tuples of the form $(T,T^2,\dots,T^k)$ where $T\in B(H)$ satisfies $0\in\Int\hat\sigma(T)$. We discuss this in the last section.

\section{Prescribing subdiagonal entries}
Recall that
$T\in B(H)$ is compact if and only if $W_{e}(T)=\{0\}$. So $T$ is of the
form $T=\la I+K$ for some $\la \in \CC $ and a compact operator
$K\in B(H)$ if and only if $W_{e}(T)$ is a singleton, i.e.,
\begin{equation*}
{\rm diam} W_{e}(T)=\max \{|\la -\mu |: \la,\mu \in W_{e}(T)\}=0.
\end{equation*}

The next lemma will be crucial in the inductive construction leading to Theorem \ref{theorem2_intro}.
Similar statements can be found in \cite[Section 2,3]{Brown} and \cite[Section 5]{MT_JFA1}.

\begin{lemma}\label{lemma1}
Let $T\in B(H)$ be an operator, which is not of the form
$T=\la I+ K$ for some $\la \in \CC $ and a compact operator
$K\in B(H),$ and let
$0<C<{\rm diam}\, W_e(T)$. Let $M\subset H$ be a subspace of finite codimension.
Then there exist vectors $v,z\in M$ such that 
\[ \|v\|\cdot\|z\|\le \frac{2\sqrt{2}}{C}, \quad v\perp z,\quad \text{and}\quad  \langle Tv,z\rangle=1.
\]
\end{lemma}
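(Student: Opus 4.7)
The plan is to reduce the construction to a two-dimensional $2 \times 2$ calculation on a carefully chosen subspace of $M$. Since $T$ is not of the form $\la I + K$ with $K$ compact, $W_e(T)$ is not a singleton, so $\mathrm{diam}\, W_e(T) > 0$. Because $C < \mathrm{diam}\, W_e(T)$, I fix $\la_1, \la_2 \in W_e(T)$ with $|\la_1 - \la_2| > C$ and set $\eta := (|\la_1 - \la_2| - C)/4 > 0$.

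Next I use Lemma \ref{codim}(a) combined with the $\perp^{(T)}$ trick from the preliminaries to produce orthonormal $x_1, x_2 \in M$ on which the $2 \times 2$ block of $T$ is diagonal. First pick a unit $x_1 \in M$ with $|\langle T x_1, x_1\rangle - \la_1| < \eta$. The subspace $M_1 := \{u \in M : u \perp^{(T)} x_1\}$ still has finite codimension in $H$, so a second application of Lemma \ref{codim}(a) yields a unit vector $x_2 \in M_1$ with $|\langle T x_2, x_2\rangle - \la_2| < \eta$. By the definition of $\perp^{(T)}$, the vector $x_2$ is orthogonal to $x_1$, $Tx_1$, and $T^* x_1$, so $\langle x_1, x_2\rangle = 0$ and the off-diagonal entries $\langle T x_1, x_2\rangle$ and $\langle T x_2, x_1\rangle$ both vanish. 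Writing $\alpha := \langle T x_1, x_1\rangle - \langle T x_2, x_2\rangle$, the triangle inequality gives $|\alpha| > |\la_1 - \la_2| - 2\eta > C$.

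Finally I look for $v = a x_1 + b x_2$ and $z = c x_1 + d x_2$ with $a,b,c,d\in\CC$. A direct expansion using the vanishing of the cross $T$-matrix entries gives $\langle v, z\rangle = a \bar c + b \bar d$ and $\langle T v, z\rangle = a \bar c \langle T x_1, x_1\rangle + b \bar d \langle T x_2, x_2\rangle$. The orthogonality condition $v \perp z$ forces $b \bar d = -a \bar c$, which turns the second identity into $\langle T v, z\rangle = \alpha \cdot a \bar c$, so prescribing $a \bar c = 1/\alpha$ (and consequently $b \bar d = -1/\alpha$) achieves $\langle T v, z\rangle = 1$. Choosing $|a| = |b| = |c| = |d| = |\alpha|^{-1/2}$ with appropriate phases realizes both constraints and yields $\|v\| = \|z\| = \sqrt{2/|\alpha|}$, hence $\|v\| \cdot \|z\| = 2/|\alpha| < 2/C < 2\sqrt{2}/C$.

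The only delicate point I foresee is ensuring that the off-diagonal entries $\langle T x_1, x_2\rangle$ and $\langle T x_2, x_1\rangle$ vanish \emph{exactly}; otherwise the clean identity $\langle T v, z\rangle = \alpha \cdot a \bar c$ would pick up perturbation terms that would have to be absorbed into the slack between the sharp bound $2/C$ and the looser bound $2\sqrt{2}/C$ stated in the lemma. The $\perp^{(T)}$ restriction to the finite-codimension subspace $M_1$ before selecting $x_2$ is precisely what removes this difficulty, and the hypothesis that $M$ has finite codimension is exactly what makes this restriction legitimate for a further application of Lemma \ref{codim}(a).
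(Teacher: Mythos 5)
Your proof is correct, and it begins exactly as the paper's does: invoke Lemma~\ref{codim}(a) together with the $\perp^{(T)}$ device to produce orthonormal $x_1,x_2\in M$ with $\langle Tx_1,x_1\rangle$ and $\langle Tx_2,x_2\rangle$ close to two points of $W_e(T)$ at distance $>C$, so that the $2\times 2$ compression of $T$ to $\mathrm{span}\{x_1,x_2\}$ is diagonal with diagonal entries separated by $|\alpha|>C$. Where you diverge is the final step. The paper sets $v=(x_1+x_2)/\sqrt{2}$ and then takes $z$ proportional to $Tv-\langle Tv,v\rangle v$, the component of $Tv$ orthogonal to $v$; this forces it to also require $x_1,x_2\in T^{-1}M$ so that $z\in M$, and the lower bound $\|w\|\ge |\langle w,x_1\rangle|>C/(2\sqrt{2})$ delivers the $2\sqrt{2}/C$ in the statement. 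You instead keep both $v$ and $z$ inside $\mathrm{span}\{x_1,x_2\}\subset M$ and solve the $2\times 2$ system $a\bar c+b\bar d=0$, $a\bar c\langle Tx_1,x_1\rangle+b\bar d\langle Tx_2,x_2\rangle=1$ by hand. This avoids the $T^{-1}M$ intersection entirely and yields the sharper estimate $\|v\|\|z\|=2/|\alpha|<2/C$, comfortably inside the required $2\sqrt{2}/C$. Both arguments are sound; yours is a modest simplification of the paper's and buys a slightly better constant, at no cost.
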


\begin{proof}
Let $\lambda,\mu\in W_e(T)$ be such that $|\lambda-\mu|={\rm diam}\, W_e(T)$. Choose a positive number $\e$ such that $\e<({\rm diam}\, W_e(T) -C)/4.$
Using Lemma \ref{codim}, (a),
let $x\in M\cap T^{-1}M$ be a unit vector such that $|\langle Tx,x\rangle-\lambda|<\e$.

Similarly, let $y\in M\cap T^{-1}M$, $y\perp^{(T)} x$ be a unit vector such that $|\langle Ty,y\rangle-\mu|<\e$.

Let $v=\frac{x+y}{\sqrt{2}}$. Then 
$$v\in M\cap T^{-1}M, \qquad \|v\|=1, \qquad \langle Tv,v\rangle=\frac{1}{2}\bigl(\langle Tx,x\rangle+\langle Ty,y\rangle\bigr),
$$ 
and so
$$
\Bigl|\langle Tv,v\rangle-\frac{\lambda+\mu}{2}\Bigr|<\e.
$$
Let $$w=Tv-\langle Tv,v\rangle v.$$
 Then $w\in M$, $w\perp v$ and
\begin{align*}
\|w\|\ge |\langle w,x\rangle|=&
\bigl|\langle Tv,x\rangle-\langle Tv,v\rangle\langle v,x\rangle\bigr|=
\frac{1}{\sqrt{2}}\Bigl|\langle Tx,x\rangle-\langle Tv,v\rangle\Bigr|\\
\ge&
\frac{1}{\sqrt{2}}\Bigl(\Bigl|\lambda-\frac{\lambda+\mu}{2}\Bigr|-2\e\Bigr)=
\frac{1}{\sqrt{2}}\frac{|\lambda-\mu|}{2}-\frac{2\e}{\sqrt{2}}>\frac{C}{2\sqrt{2}}.
\end{align*}
Set \[ z=\frac{w}{\|w\|^2}.\]
 Then 
\[z\in M, \quad z\perp v, \quad \|z\|=\frac{1}{\|w\|}\le\frac{2\sqrt{2}}{C} \quad \text{and}\quad
\langle Tv,z\rangle=\langle w,z\rangle=1,
\]
as required.
\end{proof}

The proof of Theorem \ref{theorem2_intro} is less technically demanding than the proof of its full matrix analogue, Theorem \ref{theorem5_intro},
 and it thus provides a good intuition
needed for understanding a more involved argument for Theorem \ref{theorem5_intro} in the next section.

\begin{proof}[Proof of Theorem \ref{theorem2_intro}.]
 Choose positive numbers $C$ and $\delta$ such that $C<{\rm diam}\, W_e(T),$ and 
$$
\delta <  \frac{C}{4\sqrt{2}}.
$$
For a given subdiagonal and admissible set $B\subset\NN\times\NN$ and the corresponding array $\{a_{nj}: (n,j)\in B\}$
subject to the size restrictions
 \[
\sum_{n:(n,j)\in B}|a_{nj}|\le\delta  \quad  \text{for all}\,\,  j \qquad \text{and} \qquad \sum_{j: (n,j)\in B}|a_{n,j}|\le\delta \quad \text{for all}\, \, n, 
\]
we construct an orthonormal basis $(u_n)_{n=1}^\infty \subset H$ such that
\begin{equation}\label{matrix1}
\langle Tu_j,u_n\rangle=a_{nj} \qquad \text{for all}\,\, n,j\in\NN,\, (n,j)\in B.
\end{equation}

To clarify technical details, the proof will be divided into several steps. 
The construction of $(u_n)_{n=1}^{\infty}$ will be based on several inductive arguments.
We start with an appropriate choice of parameters needed for our subsequent considerations.

Fix the next initial settings:
\begin{itemize}
\item [(i)] 
Fix a positive number $\eta$ such that 
$\eta <1- \frac{4\delta\sqrt{2}}{C}$.

\item [(ii)] For $s\in\NN$ write  $s=2^{r(s)}(2l(s)-1)$, where the integers $r(s)\ge 0$ and $l(s)\ge 1$ are defined uniquely.

\item [(iii)] Fix (very small) positive numbers $\rho_s,  s\in\NN$. The precise size of these numbers is not important, we require only
that $\sum_{s=1}^\infty\rho_s<1$ and 
\begin{equation}\label{dist_bound}
\bigl((1-\eta/2)^{(l(s)-2)/2}+\rho_s\bigr)^2(1-\eta)\le (1-\eta/2)^{l(s)-1}
\end{equation}
for every $s\in \NN$ represented as in (ii).

\item [(iv)] Set formally $n_0=0$ and define inductively an increasing sequence $(n_s)_{s=1}^\infty$ such that $n_1=1$ and $(n_s,m)\notin B$ for all $m=1,\dots,n_{s-1}$. 

\item [(v)] For $(n,i)\in B$ let 
$$\beta_{ni}:=|a_{ni}|^{1/2}\arg(a_{ni}) \qquad \text{and} \qquad  \gamma_{ni}:=|a_{ni}|^{1/2}.$$
\end{itemize}
The vectors $u_n, n\in\NN,$ forming an orthonormal basis in $H,$ will be constructed in the form
$$
u_n=\alpha_nw_n+\sum_{j:(j,n)\in B}\beta_{jn}v_{jn}+\sum_{i:(n,i)\in B}\gamma_{ni}z_{ni}+b_n,
$$
where $w_n, v_{jn}, z_{ni}$ and $b_n$ are suitable  elements of $H$. Each of the pairs $v_{jn},z_{jn}$ will ensure that $\langle Tu_j,u_n\rangle=a_{nj}$ for $(n,j) \in B.$ The vectors $w_n$ are used only to have $\|u_n\|=1$.
The vectors $b_n$ will help to arrange $\bigvee_{n=1}^{\infty} u_n=H.$ To this end, we fix in advance an orthonormal basis $(y_r)_{r=0}^\infty$ in $H$, 
and for every $r$ we construct a sequence $\{y_{r,l}:l \ge 0\}$ such that $\lim_{l\to\infty}y_{r,l}=y'_r$ with  $y'_r\in\bigvee_{n=1}^{\infty} u_n$ and $y'_r$ being close enough to $y_r$. This will imply that  $\bigvee_{r=0}^\infty y'_r=H$ and then $\bigvee_{n=1}^{\infty}u_n=H$.
Set formally $y_{r,0}=y_r$ for all $r$.

First, by an inductive argument, we construct vectors $b_n,w_n, n\in\NN,$ and $v_{ni},z_{ni},  (n,i)\in B$, and  numbers $\alpha_n\ge 0$ 
in the following way:

Let $s\in\NN$ and suppose that the vectors 
$$
b_n, \,\, w_n, \,\, v_{ni}, \,\, z_{ni}, \quad n\le n_{s-1}, (n,i)\in B
$$
$$
y_{r,l}, \quad 2^r(2l-1)\le s-1,
$$ 
and  numbers $\alpha_n,\ \  n\le n_{s-1},$ 
have already been constructed in such a way
that if
$$
u_{n,s-1}:=\alpha_nw_n+\sum_{j:(j,n)\in B\atop j\le n_{s-1}}\beta_{jn}v_{jn}+\sum_{i:(n,i)\in B}\gamma_{ni}z_{ni}+b_n, \qquad n=1,\dots,n_{s-1},
$$
then the vectors $u_{1,s-1},\dots,u_{n_{s-1},s-1}$ are mutually orthogonal, 
$$
\|u_{n,s-1}\|^2=1-\sum_{j>n_{s-1}, (j,n)\in B}|a_{jn}|\frac{2\sqrt{2}}{C}, \qquad n=1,\dots,n_{s-1},
$$
and
$$ \|y_{r,l}-y_{r,l-1}\|\le\rho_{2^r(2l-1)} \quad \text{for all}\ \ r,l \ \ \text{with}\ \ 2^r(2l-1)\le s-1.$$
\medskip

(A) Define first vectors $b_n,  n_{s-1}<n\le n_s.$ 

If $n_{s-1}<n<n_s,$ then set $b_n=0$. 

Write  $s=2^{r(s)}(2l(s)-1)$ and  
define 
$$
L_{s-1}=\bigvee_{n=1}^{n_{s-1}}u_{n,s-1}.
$$
 If $y_{r(s),l(s)-1}\notin L_{s-1}$ then set $y_{r(s),l(s)}=
y_{r(s),l(s)-1}$. If otherwise $y_{r(s),l(s)-1}\in L_{s-1},$ then choose $y_{r(s),l(s)}\notin L_{s-1}$ such that
$$
\|y_{r(s),l(s)}\|\le 1 \qquad \text{and} \qquad \bigl\|y_{r(s),l(s)}-y_{r(s),l(s)-1}\bigr\|< \rho_s.
$$

In both cases $y_{r(s),l(s)}\notin L_{s-1},$ so that we can  set
$$
b_{n_s}=\frac{(I-P_{L_{s-1}})y_{r(s),l(s)}}{\|(I-P_{L_{s-1}})y_{r(s),l(s)}\|}\cdot\sqrt{\eta},
$$
where $P_{L_{s-1}}$ is the orthogonal projection onto $L_{s-1}.$
\medskip

(B) For $n_{s-1}<n\le n_s$ and $i$ such that $(n,i)\in B$, using Lemma \ref{lemma1}, define inductively vectors $v_{ni}, z_{ni}\in H$ such that
\begin{align}\label{list11}
\|v_{ni}\|^2&=\frac{2\sqrt{2}}{C},\qquad\|z_{ni}\|^2\le\frac{2\sqrt{2}}{C},\notag \\
v_{ni}, z_{ni}&\perp^{(T)} v_{m,i'},z_{m,i'},\quad m\le n_s, (m,i')\in B, (m,i')\ne(n,i),\notag \\
v_{ni}, z_{ni}&\perp^{(T)} w_{m},\qquad  m\le n_{s-1}, \\
v_{ni}, z_{ni}&\perp^{(T)} b_{m},\qquad  m\le n_s,\notag \\
v_{ni}, z_{ni}&\perp y_{r,l},\qquad 2^r(2l-1)\le s,\notag\\
z_{ni}&\perp v_{ni},\notag \\
\langle Tv_{ni}&, z_{ni}\rangle=1.\notag
\end{align}

(C) For $n_{s-1}< n\le n_s$ find inductively vectors $w_n\in H$ such that
\begin{align}\label{list12}
\|w_n\|&=1,\notag \\
w_n&\perp^{(T)}v_{m,i},z_{m,i}, \qquad m\le n_s, (m,i)\in B,\notag \\
w_n&\perp^{(T)} w_{m}, \qquad m\le n_s, m\ne n, \\
w_n&\perp^{(T)} b_{m}, \qquad m\le n_s,\notag \\
w_n&\perp  y_{r,l},\qquad 2^r(2l-1)\le s.\notag
\end{align}

(D) If $n$ is such that $n_{s-1}<n\le n_s,$ then 
\begin{align*}
&\sum_{j:(j,n)\in B}|\beta_{jn}|^2\cdot\frac{2\sqrt{2}}{C}+\Bigl\|\sum_{i:(n,i)\in B}\gamma_{ni}z_{ni}+b_n\Bigr\|^2\\
&\le \frac{2\delta\sqrt{2}}{C}+\sum_{i:(n,i)\in B}|\gamma_{ni}|^2\|z_{ni}\|^2+\|b_n\|^2\\
&\le
\frac{4\delta\sqrt{2}}{C}+\eta\le 1.
\end{align*}
Thus we can set
$$
\alpha_n=\Bigl(1-\sum_{j:(j,n)\in B}|\beta_{jn}|^2\cdot\frac{2\sqrt{2}}{C}-\Bigl\|\sum_{i:(n,i)\in B}\gamma_{ni}z_{ni}+b_n\Bigr\|^2\Bigr)^{1/2}.
$$

For $n\le n_s$ set 
$$
u_{n,s}=\alpha_nw_n+\sum_{j:(j,n)\in B\atop j\le n_{s}}\beta_{jn}v_{jn}+\sum_{i:(n,i)\in B}\gamma_{ni}z_{ni}+b_n.
$$
Observe that
\begin{align*}
\|u_{n,s}\|^2&=
\alpha_n^2+\sum_{j\le n_s\atop (j,n)\in B}|\beta_{jn}|^2\cdot\frac{2\sqrt{2}}{C}
+\Bigl\|\sum_{i:(n,i)\in B}\gamma_{ni}z_{ni}+b_n\Bigr\|^2
\\
&=
1-\sum_{j>n_s\atop (j,n)\in B}|\beta_{jn}|^2\cdot\frac{2\sqrt{2}}{C}
=1-\sum_{j>n_s\atop (j,n)\in B}|a_{jn}|\cdot\frac{2\sqrt{2}}{C}
.
\end{align*}
Moreover, in view of \eqref{list11} and \eqref{list12}, the vectors $u_{1, s},\dots, u_{n_s, s}$ are mutually orthogonal.

This finishes our inductive construction.
\medskip

(E) Suppose now that the vectors $b_n,w_n, v_{ni},z_{ni}, n\in\NN, (n,i)\in B,$ and $y_{r,l}, r,l\ge 0$ have been constructed in the way described above. For $n\in\NN$ set
$$
u_n=\alpha_nw_n+\sum_{j:(j,n)\in B}\beta_{jn}v_{jn}+\sum_{i:(n,i)\in B}\gamma_{ni}z_{ni}+b_n,
$$
and note that 
$$u_n=\lim_{s\to\infty} u_{n,s}.$$
 So
$$
\|u_n\|^2=\lim_{s\to\infty}\|u_{n,s}\|^2=
\lim_{s\to\infty} \Bigl(1-\sum_{j>n_s\atop (j,n)\in B}|a_{jn}|\frac{2\sqrt{2}}{C}\Bigr)=1
$$
for every $n \in \mathbb N.$ Moreover, for any $m \in \mathbb N, m\ne n,$ we have
$$
\langle u_m,u_n\rangle=
\lim_{s\to\infty}\langle u_{m,s},u_{n,s}\rangle=0.
$$
Hence  $(u_n)_{n=1}^\infty$ is an orthonormal system in $H$.

\medskip

(F) Next we show that $\langle Tu_m,u_n\rangle=a_{nm},  (n,m) \in B,$
so that  $T$ will have the required matrix with respect to $(u_n)_{n=1}^{\infty}$
after we prove that  $(u_n)_{n=1}^\infty$ is a basis.

Fix $(n,m)\in B,$ and note that  $m<n$. To evaluate  $\langle Tu_m,u_n\rangle,$ decompose it as follows: 
$$
\langle Tu_m,u_n\rangle= A_1+A_2+A_3+A_4+A_5+A_6+A_7,
$$
where
\begin{align*}
A_1=&\Bigl\langle \alpha_mTw_m+\sum_{j': (j',m)\in B}\beta_{j',m}Tv_{j',m}, \alpha_n w_n+\sum_{j:(j,n)\in B}
\beta_{jn}v_{jn}\Bigr\rangle,\\
A_2=&\Bigl\langle \alpha_mTw_m+\sum_{j': (j',m)\in B}\beta_{j',m}Tv_{j',m},\sum_{i: (n,i)\in B}\gamma_{ni}z_{ni}\Bigr\rangle,\\
A_3=&
\Bigl\langle \sum_{i: (m,i)\in B}{\gamma}_{m,i}Tz_{m,i},\alpha_n w_n+\sum_{j:(j,n)\in B}
\beta_{jn}v_{jn}\Bigr\rangle,\\
A_4=&\Bigl\langle \sum_{i': (m,i')\in B}\gamma_{m,i'}Tz_{m,i'},\sum_{i: (n,i)\in B}\gamma_{ni}z_{ni}\Bigr\rangle,\\
A_5=&\Bigl\langle Tb_m,\alpha_n w_n+\sum_{j:(j,n)\in B}
\beta_{jn}v_{jn}\Bigr\rangle,\\
A_6=&\Bigl\langle Tb_m,\sum_{i:(n,i)\in B}\gamma_{ni}z_{ni}\Bigr\rangle,\\
A_7=&\langle Tu_m,b_n\rangle.
\end{align*}
Using the properties given in \eqref{list11} and \eqref{list12}, it is direct to verify that 
$$A_1=A_3=A_4=A_5=A_6=0.$$
To evaluate $A_7$, note that clearly $A_7=0$ if $n\notin \{n_1,n_2,\dots\}$. If otherwise $n=n_s$, then $(n,m)\in B$ implies that $m>n_{s-1}$. In particular, $b_m=0$, so 
$$
Tu_m\in\bigvee
\{Tw_m, Tv_{jm},Tz_{mi}: (j,m)\in B, (m,i)\in B\}\subset \{b_n\}^{\perp}
$$
by our construction.
Hence $A_7=0$ for all $n \in \mathbb N.$ 
Finally, using  \eqref{list11}, we infer that
$$
A_2=\bigl\langle \beta_{nm}Tv_{nm},\gamma_{nm}z_{nm}\bigr\rangle=a_{nm}.
$$
and $\langle Tu_m,u_n\rangle=a_{nm}$ as required.
\medskip

(G) It remains to prove that $(u_n)_{n=1}^\infty$ is a basis in $H$, i.e., that  $(u_n)_{n=1}^\infty$ is complete.
For $n\in\NN$ write $M_n=\bigvee\{u_1,\dots,u_n\}$,
and for every $r\ge 0$  let 
$$y'_r=\lim_{l\to\infty}y_{r,l},$$
where $y_{r, l}$ are defined in Step A and the limit exists by construction and the initial setting (iii).

We show by induction on $l$ that
\begin{equation}\label{11}
\dist^2\bigl\{y_{r,l}, M_{n_{2^r(2l-1)}}\bigr\}\le (1-\eta/2)^{l-1}
\end{equation}
for all $l\in\NN$. This is clear if $l=1$. For $l\ge 2$ let $n=n_{2^r(2l-1)}$ and suppose \eqref{11} is true with $l$ replaced by $l-1$. 
Then, using the definition of $u_n$ along with \eqref{list11} and \eqref{list12}, 
we have
\begin{align*}
\dist^2\{y_{r,l}, M_n\}=&
\dist^2\{y_{r,l},M_{n-1}\}-|\langle y_{r,l},u_n\rangle|^2\\
=&
\dist^2\{y_{r,l},M_{n-1}\}-|\langle y_{r,l},b_n\rangle|^2,
\end{align*}
by the construction of $u_n$.
Moreover, 
$$
|\langle y_{r,l},b_n\rangle|^2=\eta\|(I-P_{L_{s-1}})y_{r,l}\|^2=\eta\|(I-P_{M_{n-1}})y_{r,l}\|^2,
$$
where $s=2^r(2l-1)$. Thus, by \eqref{dist_bound},
\begin{align*}
\dist^2\{y_{r,l}, M_n\}=&
\|(I-P_{M_{n-1}})y_{r,l}\|^2(1-\eta)\\
\le&
\bigl(\|(I-P_{M_{n-1}}y_{r,l-1}\|+\|y_{r,l}-y_{r,l-1}\|\bigr)^2 (1-\eta)\\
\le&
\bigl( (1-\eta/2)^{(l-2)/2}+\rho_s\bigr)^2(1-\eta)\le (1-\eta/2)^{l-1}.
\end{align*}

So, for every $r \ge 0,$
$$
\dist^2\Bigl\{y'_r,\bigvee_{n=1}^\infty u_n\Bigr\}=
\lim_{l\to\infty}\dist^2\bigl\{y_{r,l},M_{2^r(2l-1)}\bigr\}\le
\lim_{l\to\infty}(1-\eta/2)^{l-1}=0.
$$
Hence $y'_r\in\bigvee_{n=1}^\infty u_n$ for all $r \ge 0.$
Now using \eqref{ros} observe that
$$
\sum_{r=0}^\infty \|y_r'-y_r\|
\le\sum_{r=0}^\infty\sum_{l=0}^\infty \|y_{r,l+1}-y_{rl}\|<
\sum_{s=1}^\infty \rho_s<1.
$$
Then by a standard perturbation result for bases, see e.g. \cite[Theorem 1.3.9]{Albiac},
$(y'_m)_{m=1}^\infty$ is a (not necessarily orthonormal) basis in $H.$
Therefore,
$\bigvee_{m=0}^\infty y'_m=H,$ and then $\bigvee_{n=1}^\infty u_n=H$ as well. Thus, the vectors $(u_n)_{n=1}^\infty$ form an orthonormal basis of $H$.
\end{proof}

As noted in the introduction, Theorem \ref{theorem2_intro} can be formulated also for $k$-tuples $\T=(T_1,\dots,T_k)\in B(H)^k$, where none of the operators $T_1,\dots,T_k$ is of the form $\lambda I+K$ with $\lambda\in\CC$ and $K\in B(H)$ compact. The arguments given above can be easily adapted to the multi-operator setting. So the proof remains unchanged and we omit it.

\section{Prescribing matrix entries: general case}

In this section we address a more general setting of operator tuples $\T=(T_1,\dots,T_k) \in B(H)^k$ 
and prescribing elements within the whole of matrix representation for $\T$ rather than its sub (or upper-)diagonal.
We start with proving counterparts of Lemma \ref{lemma1}.  
Necessarily they are a bit more involved though based on the same idea.

\begin{lemma}\label{lemma3}
Let $\T=(T_1,\dots,T_k)\in B(H)^k$, let $0 \in\Int W_e(T),$ and assume that $$\dist\{0,\partial W_e(\T)\}>\e>0,$$
for some $\e >0.$ Then for any subspace  $M\subset H$ of finite codimension,  
there exist vectors $v,z\in M$ satisfying the following conditions:
\begin{itemize}
\item[(a)] $\|v\|=1$;

\item[(b)] $\|z\|\le \frac{2}{\e}$;

\item[(b)] $\langle \T v,v\rangle= 0$;

\item[(c)] $z\in\bigvee_{j=1}^k\{T_jv,T_j^*v\}$;

\item[(d)] $z\perp v$;

\item[(e)] $\langle T_1v,z\rangle=1$, 
$\langle T_1^*v,z\rangle=0$, and
$$\langle T_jv,z\rangle=\langle T_j^*v,z\rangle=0 \qquad \text{for all}\,\, j=2,\dots,k.$$
\end{itemize}
\end{lemma}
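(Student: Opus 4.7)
The plan is to extend the two-vector construction in the proof of Lemma \ref{lemma1} by using a third vector together with cube roots of unity. This simultaneously produces $\langle \T v, v\rangle = 0$ and, more importantly, separates $T_1 v$ from $T_1^* v$ in the finite-dimensional span where $z$ is required to live.

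First, I would replace $M$ by $M' := M \cap \bigcap_{j=1}^k \bigl(T_j^{-1}(M) \cap (T_j^*)^{-1}(M)\bigr)$, still a subspace of finite codimension; this guarantees that any linear combination of the vectors $T_j v, T_j^* v$ constructed from $v \in M'$ stays in $M$. Let $\omega = e^{2\pi i / 3}$. Since $\dist\{0, \partial W_e(\T)\} > \e$, each of the three points $(\e\omega^{i-1}, 0, \dots, 0) \in \CC^k$, $i = 1, 2, 3$, lies in $\Int W_e(\T)$. Applying Lemma \ref{codim}(b) three times in successively smaller finite-codimensional subspaces to enforce the mutual $\perp^{(\T)}$ relations, I obtain unit vectors $x_1, x_2, x_3 \in M'$, pairwise $\perp^{(\T)}$, with $\langle T_1 x_i, x_i\rangle = \e\omega^{i-1}$ and $\langle T_j x_i, x_i\rangle = 0$ for $j \ge 2$.

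Setting $v := (x_1 + x_2 + x_3)/\sqrt{3}$, the identity $1 + \omega + \omega^2 = 0$ together with the vanishing of all cross terms produced by the $\perp^{(\T)}$ relations gives $\|v\| = 1$ and $\langle \T v, v\rangle = 0$. For the key auxiliary direction I take the unit vector $\xi := (x_1 + \omega x_2 + \omega^2 x_3)/\sqrt{3}$. A short calculation, again using the $\perp^{(\T)}$ relations and the identities $\sum_{i=1}^3 |\omega^{i-1}|^2 = 3$ and $\sum_{i=1}^3 \omega^{2(i-1)} = 0$, yields $\langle T_1 v, \xi\rangle = \e$ and $\langle T_1^* v, \xi\rangle = \langle T_j v, \xi\rangle = \langle T_j^* v, \xi\rangle = 0$ for every $j \ge 2$.

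With $\xi$ in hand, put $L := \bigvee\{T_1^* v,\, T_j v,\, T_j^* v : 2 \le j \le k\}$. Since $\xi$ is a unit vector perpendicular to $L$, the variational formula for distance to a subspace gives $\dist(T_1 v, L) \ge |\langle T_1 v, \xi\rangle| = \e$. Let $w := T_1 v - P_L T_1 v$; then $w \in \bigvee_{j=1}^k\{T_j v, T_j^* v\}$, $\|w\| \ge \e$, and by Gram--Schmidt orthogonality the vector $z := w/\|w\|^2$ satisfies $\|z\| \le 1/\e \le 2/\e$ together with the prescribed inner-product relations (e). The orthogonality $z \perp v$ is immediate from the inclusion $z \in \bigvee_{j=1}^k\{T_j v, T_j^* v\}$ combined with $\langle T_j v, v\rangle = \langle T_j^* v, v\rangle = 0$ for every $j$, and the membership $z \in M$ follows from the choice of $M'$.

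The main obstacle I anticipate is the separation step. A naive two-vector analogue of the proof of Lemma \ref{lemma1} breaks down: the projections of $T_1 v$ and $T_1^* v$ onto $\bigvee\{x_1, x_2\}$ become scalar multiples of each other, so no test vector in that plane can produce $\langle T_1 v, \xi\rangle \ne 0 = \langle T_1^* v, \xi\rangle$. The cube-roots-of-unity pattern is precisely what forces those projections onto mutually orthogonal directions in $\bigvee\{x_1, x_2, x_3\}$, enabling a single $\xi$ to do the separation.
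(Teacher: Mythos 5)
Your proof is correct, and it takes a genuinely different route from the paper's at the two points where the details matter. The paper uses \emph{four} auxiliary vectors $x_1,\dots,x_4$ with $\langle T_1 x_i,x_i\rangle$ running over the fourth roots of unity times $\e$ (namely $\pm\e,\pm i\e$) and $v=\tfrac12\sum x_i$, and then estimates $\dist(T_1v,L')$ \emph{directly} by bounding $\|T_1v+u\|$ from below for every $u=\alpha T_1^*v+\sum_{j\ge 2}(\beta_j T_jv+\gamma_j T_j^*v)$, testing against $x_1$ and $x_2$ and taking a maximum over the resulting two quantities; this yields $\dist(T_1v,L')\ge\e/2$ and hence $\|z\|\le 2/\e$. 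You instead use \emph{three} auxiliary vectors with the cube-roots-of-unity pattern, which still forces $\langle\T v,v\rangle=0$, and then produce a single explicit witness $\xi=(x_1+\omega x_2+\omega^2 x_3)/\sqrt 3$ that is a unit vector orthogonal to $L'$ with $\langle T_1 v,\xi\rangle=\e$; the one-line inequality $\dist(T_1v,L')\ge|\langle T_1v,\xi\rangle|$ then replaces the paper's parameter estimate, and in fact delivers the slightly sharper bound $\|z\|\le 1/\e$. Your diagnosis of why a two-vector version cannot work is also sound: with only two orthogonal auxiliaries the constraint $\langle\T v,v\rangle=0$ forces $\langle T_1x_2,x_2\rangle=-\langle T_1x_1,x_1\rangle$, which makes the projections of $T_1v$ and $T_1^*v$ onto $\bigvee\{x_1,x_2\}$ collinear, so no test vector in that plane separates them; three (or more) roots of unity break that degeneracy. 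Both arguments prove the same statement; yours is arguably a little cleaner because the separating functional $\xi$ is written down explicitly instead of being extracted via a case analysis over $\alpha$.
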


\begin{proof}
Denote for short $\T^{-1}M:=\{x \in H: T_j x \in M, 1 \le j \le k\}.$ 
Since $(\pm \e,0,\dots,0)$ and $(\pm i\e,0,\dots,0)\in\Int W_e(\T),$  by Lemma \ref{codim}, (b)
there exists a unit vector $x_1\in M\cap \T^{-1}M\cap \T^{*-1}M$
such that 
$$
\langle \T x_1,x_1\rangle=(\e,0,\dots,0).
$$
Similarly, there exists a unit vector 
$$x_2\in M\cap \T^{-1}M\cap (\T^{*})^{-1}M\cap\{u\in H: u\perp^{(\T)} x_1\}$$
 such that 
$$
\langle \T x_2,x_2\rangle=(i\e,0,\dots,0),
$$ 
and there exist unit vectors $$x_3\in M\cap \T^{-1}M\cap (\T^{*})^{-1}M\cap\{u\in H: u\perp^{(\T)} x_1,x_2\}$$ and $$x_4\in M\cap \T^{-1}M\cap (\T^{*})^{-1}M\cap\{u\in H: u\perp^{(\T)} x_1,x_2,x_3\}$$ with 
$$
\langle \T x_3,x_3\rangle=(-\e,0,\dots)
$$ 
and 
$$
\langle \T x_4,x_4\rangle=(-i\e,0,\dots,0).
$$

Let $$v=\frac{1}{2}(x_1+x_2+x_3+x_4).$$ Then $v\in M\cap \T^{-1}M\cap (\T^{*})^{-1}M$, $\|v\|=1$ and 
$$
\langle \T v,v\rangle=
\frac{1}{4}\Bigl(\langle \T x_1,x_1\rangle+\langle \T x_2,x_2\rangle+\langle \T x_3,x_3\rangle+\langle \T x_4,x_4\rangle\bigr)=0.
$$

Define
$$
L:=\bigvee_{j=1}^k\{T_jv,T_j^*v\}
$$
and
$$
L':=\bigvee\{T_2v,\dots,T_kv,T_1^*v,\dots,T_k^*v\}.
$$
If we let $$
u:=\alpha T_1^*v+\sum_{j=2}^k(\beta_j T_jv+\gamma_j T^*_jv),
$$
then $u\in L',$ and we have
$$
\|T_1v+u\|\ge
|\langle T_1v+u,x_1\rangle|=
\Bigl|\frac{\langle T_1x_1,x_1\rangle}{2}+\alpha\langle T_1^*x_1,x_1\rangle\Bigr|=
\Bigl|\frac{\e}{2}+\alpha\e\Bigr|.
$$
Similarly,
$$
\|T_1v+u\|\ge
|\langle T_1v+u,x_2\rangle|=
\Bigl|\frac{\langle T_1x_2,x_2\rangle}{2}+\alpha\langle T_1^*x_2,x_2\rangle\Bigr|=
\Bigl|\frac{i\e}{2}-i\e\alpha\Bigr|.
$$
So 
$$
\|T_1v+u\|\ge \e\max\Bigl\{\bigl|\frac{1}{2}+\alpha\bigr|,\bigl|\frac{1}{2}-\alpha\bigr|\Bigr\}\ge\frac{\e}{2}
$$
and $\dist\{T_1v,L'\}\ge\frac{\e}{2}$.

Denoting by $P_{L'}$ the orthogonal projection from $L$ onto $L'$,
set finally
$$
z=\frac{(I-P_{L'})T_1v}{\|(I-P_{L'})T_1v\|^2}.
$$
We have $\|(I-P_{L'})T_1v\|=\dist\{T_1v,L'\}\ge\frac{\e}{2}$ and
$\|z\|\le\frac{2}{\e}$.

Moreover, $\langle T_1 v,z\rangle=1$ and
$z\perp L'$. Thus, $v$ and $z$ satisfy all of the conditions (a)--(e). $\Box$
\end{proof}

Next we generalise  Lemma \ref{lemma3} by taking into account 
all of the elements $T_1, \dots , T_k$ of the tuple $\T$.

\begin{lemma}\label{lemma4}
Let $\T=(T_1,\dots,T_k)\in B(H)^k$, and let $\lambda\in\Int W_e(\T)$. 
For any  $0<\e<\dist\{\lambda,\partial W_e(\T)\}$ and
subspace $M\subset H$ of finite codimension,  
there exist vectors $v_1,\dots,v_k, \tilde v_1,\dots, \tilde v_k, z_1,\dots,z_k, \tilde z_1,\dots, \tilde z_k\in M$ satisfying the following conditions for all $j=1,\dots,k:$
\begin{itemize}
\item[(a)] $\|v_j\|\cdot\|z_j\|\le\frac{2}{\e}$ and $\|\tilde v_j\|\cdot\|\tilde z_j\|\le \frac{2}{\e}$;

\item[(b)] $\langle \T v_j,v_j\rangle=\lambda \|v_j\|^2$ and  $\langle \T \tilde v_j, \tilde v_j\rangle=\lambda \|\tilde v_j\|^2$;

\item[(c)]
if $i\ne j, 1 \le i,j \le k,$ then 
\begin{align*}
v_j, \tilde v_j, z_j, \tilde z_j&\perp^{(\T)} v_i, \tilde v_i, z_i, \tilde z_i,\\
\tilde v_j, \tilde z_j&\perp^{(\T)} v_j, z_j,\\
z_r\in&\bigvee_{j=1}^k\{T_j v_r,T_j^*v_r\},\\
\tilde z_r\in&\bigvee_{j=1}^k\{T_j \tilde v_r,T_j^* \tilde v_r\},\\
z_j&\perp v_j, T^*_jv_j,T_rv_j, T^*_r v_j, \quad r\ne j,\\
\tilde z_j&\perp \tilde v_j, T_j\tilde v_j,T_r \tilde v_j, T^*_r \tilde v_j\quad r\ne j,
\end{align*}
\item[(d)] $\langle T_j v_j,z_j\rangle=\langle T_j^*\tilde v_j, \tilde z_j\rangle=1.$
\end{itemize}
\end{lemma}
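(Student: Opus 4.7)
The plan is to reduce the statement to the case $\lambda=0$ via the shift $\T\mapsto\T-\lambda$ and then iterate the construction of Lemma \ref{lemma3}. The hypothesis gives $0\in\Int W_e(\T-\lambda)$ with $\dist\{0,\partial W_e(\T-\lambda)\}>\e$, so Lemma \ref{lemma3} applies to any reordering of the shifted tuple. I would produce the $4k$ vectors in the order $(v_1,z_1,\tilde v_1,\tilde z_1,v_2,z_2,\dots)$; at each step, replace $M$ by the finite-codimensional subspace $M'\subset M$ of vectors $\perp^{(\T)}$ to everything already constructed. Inside $M'$, by Lemma \ref{codim}(b) the four-test-vector recipe of Lemma \ref{lemma3} remains applicable.

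For the pair $(v_j,z_j)$ apply this recipe to $\T-\lambda$ with $T_j-\lambda_j$ playing the role of $T_1$: choose mutually $\perp^{(\T)}$ unit vectors $x_1,\dots,x_4\in M'$ with $\langle(\T-\lambda)x_m,x_m\rangle$ equal to $\pm\e$ or $\pm i\e$ in the $j$-th coordinate and zero elsewhere, put $v_j=(x_1+x_2+x_3+x_4)/2$, and let $z_j$ be the rescaled projection of $(T_j-\lambda_j)v_j$ onto the orthogonal complement in $L_{\lambda,j}:=\bigvee\{(T_i-\lambda_i)v_j,\,(T_i-\lambda_i)^*v_j\}$ of its other generators, normalized so that $\langle(T_j-\lambda_j)v_j,z_j\rangle=1$. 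Then $\|v_j\|=1$, $\langle\T v_j,v_j\rangle=\lambda\|v_j\|^2$, and because $\langle(\T-\lambda)v_j,v_j\rangle=0$ one has $v_j\perp L_{\lambda,j}\ni z_j$, giving $z_j\perp v_j$. Unshifting each shifted orthogonality using $z_j\perp v_j$ yields $\langle T_j v_j,z_j\rangle=1$, $\langle T_j^*v_j,z_j\rangle=0$, and $\langle T_r v_j,z_j\rangle=\langle T_r^*v_j,z_j\rangle=0$ for $r\ne j$. The tilde pair $(\tilde v_j,\tilde z_j)$ is produced by the same recipe with $T_j^*-\bar\lambda_j$ in the role of $T_1$, delivering $\langle T_j^*\tilde v_j,\tilde z_j\rangle=1$ together with the matching orthogonalities. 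The size bound $\|v_j\|\|z_j\|\le 2/\e$ (and its tilde analogue) inherits directly from the $\dist\ge\e/2$ estimate at the heart of Lemma \ref{lemma3}, and the mutual $\perp^{(\T)}$ requirements in (c) are enforced automatically by the inductive shrinking of $M'$.

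The main obstacle I anticipate is verifying the span inclusions in (c): $z_r\in L_r:=\bigvee\{T_i v_r,T_i^*v_r\}$ and $\tilde z_r\in\bigvee\{T_i\tilde v_r,T_i^*\tilde v_r\}$. The shift construction a priori only puts $z_r$ in $L_{\lambda,r}\subset L_r+{\rm span}(v_r)$, and one must show that the $v_r$-component of $z_r$ vanishes. Writing $z_r=w+cv_r$ with $w\in L_r$, the relation $z_r\perp v_r$ combined with the identities $\langle T_i v_r,v_r\rangle=\lambda_i$ and $\langle T_i^*v_r,v_r\rangle=\bar\lambda_i$ (forced by $\langle\T v_r,v_r\rangle=\lambda$), together with the explicit four-test-vector form of $v_r$, should allow one to compute the relevant Gram data and force $c=0$. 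This is the most delicate step of the argument, and may require strengthening the orthogonality of the test vectors $x_1,\dots,x_4$ beyond $\perp^{(\T)}$ (for instance, imposing orthogonality to suitable second-order images $T_iT_l x_n$ with $n\ne m$), which remains possible by a further application of Lemma \ref{codim} within a still finite-codimensional subspace. An alternative route that bypasses the shift entirely would be to construct $z_r$ directly inside $L_r$ as the rescaled component of $T_j v_r$ orthogonal in $L_r$ to the $2k-1$ vectors $T_j^*v_r$ and $T_s v_r, T_s^*v_r$ ($s\ne j$), and then to verify by the test-vector analysis that the resulting $z_r$ is automatically $\perp v_r$, making the overdetermined system of (c) consistent.
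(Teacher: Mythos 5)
Your plan coincides with the paper's proof: reduce to $\lambda=0$ by replacing $\T$ with $\T-\lambda$, then construct the $4k$ vectors one pair at a time by applying Lemma~\ref{lemma3} to the cyclically permuted tuples $(T_j,T_{j+1},\dots,T_{j-1})$ and their starred variants $(T_j^*,T_{j+1},\dots,T_{j-1})$, each application taking place inside a further finite-codimensional subspace of $M$ consisting of vectors $\perp^{(\T)}$ to everything already built. That core argument is correct, and conditions (a), (b), (d), and the orthogonality parts of (c) all transfer back from $\T-\lambda$ to $\T$ because the constructed vectors are pairwise orthogonal (e.g.\ $\langle(T_j-\lambda_j)v_j,z_j\rangle=1$ plus $z_j\perp v_j$ gives $\langle T_jv_j,z_j\rangle=1$).

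Your caution about the span inclusion $z_r\in\bigvee_j\{T_jv_r,T_j^*v_r\}$ is justified: the shift construction only delivers $z_r\in\bigvee_j\{(T_j-\lambda_j)v_r,(T_j-\lambda_j)^*v_r\}$, which sits inside $\bigvee_j\{T_jv_r,T_j^*v_r\}+\CC v_r$ but not, in general, inside $L_r:=\bigvee_j\{T_jv_r,T_j^*v_r\}$. However, neither of your proposed remedies works. The Gram computation cannot force $c=0$: the very identity $\langle T_jv_r,v_r\rangle=\lambda_j\|v_r\|^2$ that you invoke is the obstruction, since for $\lambda\ne 0$ it shows $L_r\not\perp v_r$, so the single constraint $z_r\perp v_r$ does not determine the $v_r$-coefficient in a decomposition $z_r=w+cv_r$. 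Your alternative route (build $z_r$ directly inside $L_r$ by projecting $T_jv_r$ off the other $2k-1$ generators) has the same defect: take $k=1$, $\lambda=i$, $v,w,w'$ orthonormal with $Tv=iv+w$ and $T^*v=-iv+w'$; then $\langle Tv,T^*v\rangle=-1$, $\|T^*v\|^2=2$, so $(I-P_{\mathrm{span}\{T^*v\}})Tv=\tfrac{i}{2}v+w+\tfrac{1}{2}w'$, whose inner product with $v$ is $i/2\ne 0$, so $z\not\perp v$ and condition (c) fails.

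The reassuring news is that this is a harmless blemish. The span inclusion is never invoked in the proof of Theorem~\ref{theorem5_intro} --- the construction there, see \eqref{list1}, uses only the orthogonality relations, the norm bounds, and the inner-product identities --- and the paper's own proof of this lemma, which reduces to $\lambda=0$ and states only that the conditions transfer ``since the vectors are mutually orthogonal'', quietly has the same issue. The clean fix is either to restrict the span inclusion to the case $\lambda=0$ (where it is exactly Lemma~\ref{lemma3}(c)), or to weaken it to $z_r\in\bigvee_j\{T_jv_r,T_j^*v_r,v_r\}$; your construction does deliver the latter, and it suffices for everything downstream.
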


\begin{proof}

Without loss of generality we may assume that $\lambda=0$. Otherwise, replace $\T$ by $\T-\lambda I$ and construct the vectors $\{v_i, \tilde v_i, z_i, \tilde z _i: 1 \le i \le k\}$ for the $k$-tuple $\T-\lambda I$. Since the vectors
$v_1,\dots,v_k, \tilde v_1,\dots, \tilde v_k, z_1,\dots,z_k, \tilde z_1,\dots, \tilde z_k$ are mutually orthogonal, these vectors satisfy all the conditions required for the $k$-tuple $\T$ as well.

By Lemma \ref{lemma3}, construct vectors $v_1,z_1\in M$ satisfying conditions (a)--(d).

Consider the $k$-tuple $(T_1^*,T_2,\dots,T_k)$. By Lemma \ref{lemma3}, find vectors $$\tilde v_1, \tilde z_1\in M\cap\{u\in H: u\perp^{(\T)} v_1,z_1\}$$ satisfying the conditions (a)--(d).

Consider now the $k$-tuple $(T_2, T_3,\dots,T_k,T_1)$ and  using Lemma \ref{lemma3} again construct vectors $$v_2, z_2\in M\cap\{u\in H: u\perp^{(\T)} v_1,z_1,\tilde v_1, \tilde z_1\}$$ satisfying the conditions (a)--(d).

Continuing this procedure for tuples $$(T^*_2,T_3,\dots,T_k,T_1), (T_3,\dots,T_k,T_1,T_2), \dots, (T_k^*,T_1,\dots,T_{k-1})$$ we construct a family $\{v_i, \tilde v_i, z_i, \tilde z _i: 1 \le i \le k\}\subset H$
with the required properties.
\end{proof}

Now we are ready to prove Theorem \ref{theorem5_intro}, which is one of the main results of this paper.
Recall that for  $\lambda=(\lambda_1,\dots,\lambda_k)\in\CC^k$, we denote  $\|\lambda\|_\infty=\max\{|\lambda_1|,\dots,|\lambda_k|\}$.

\begin{proof}[Proof of Theorem \ref{theorem5_intro}.]

Let $\T=(T_1,\dots,T_k)\in B(H)^k.$
Given $\e >0$ and an admissible set $B \subset \mathbb N \times \mathbb N$, let $\delta$ be such that  
$$
0<\delta < \frac{\e\sqrt{\e}}{18k}.$$
For a given array $\{a_{nj}: (n,j)\in B\cup\Delta \}\subset \mathbb C^k$ satisfying the size conditions
$$a_{nn}\in\Int W_e(\T), \quad  \dist\{a_{nn},\partial W_e(\T)\}>\e \quad \text{for all} \,\, n\in\NN,$$
and
$$\sum_{j:(n,j)\in B}\|a_{nj}\|_\infty\le\delta \quad \text{for all}\,\, n\in\NN \quad \text{and}\quad 
\sum_{n: (n,j)\in B}\|a_{jn}\|_\infty\le\delta \quad \text{for all}\,\, j\in\NN,
$$
we construct an orthonormal basis   $(u_n)_{n=1}^\infty$ in $H$ such that 
\begin{equation}\label{matrix22}
\langle \T u_j,u_n\rangle=a_{nj} \qquad
\text{for all}\,\, n,j\in\NN, (n,j)\in B\cup\Delta.
\end{equation}

Similarly to the proof of Theorem \ref{theorem2_intro}, we will use inductive arguments, and for clarification purposes, 
we will divide them into several steps. 

For $(j,n)\in B\cup\Delta$ let $a_{jn}=(a_{jn}^{(1)},\dots,a_{jn}^{(k)})$.

Fix the following initial settings:
\begin{itemize}
\item [(i)] Without loss of generality we can assume that $\|T_t\|\le 1$ for all $t=1,\dots,k$.
\item [(ii)] We can also assume that $B$ is symmetric, i.e., $(j,n)\in B\Longleftrightarrow (n,j)\in B$. Let $B_\Delta:=\{(j,n)\in B, j>n\}$.
\item [(iii)] Fix $\eta$ such that  
$0<\eta \le\delta.$
\item[(iv)]
For $s\in\NN$ write $s=2^{r(s)}(2l(s)-1),$ where the integers $r(s)\ge 0$ and $l(s)\ge 1$
are defined uniquely.
\item [(v)]
Fix positive numbers $\rho_s, s\in\NN,$ such that $\sum_{s=1}^\infty \rho_s<1$ and  \eqref{dist_bound} holds.
\item [(vi)]
Set formally $n_0=0$. Fix an increasing sequence $(n_s)_{s=1}^\infty$ such that $n_1=1$ and
$(n_s,1),(n_s,2),\dots,(n_s,n_{s-1})\notin B_\Delta$ for all $s \ge 2.$
\item [(vii)]
For $(n,i)\in B_\Delta$ and $t\in\{1,\dots,k\}$ let
\begin{align*}
\beta_{ni}^{(t)}=&|a_{ni}^{(t)}|^{1/2}\arg(a_{ni}^{(t)}),\\
\gamma_{ni}^{(t)}=&|a_{ni}^{(t)}|^{1/2},\\
{\tilde \beta}_{ni}^{(t)}=&|a_{in}^{(t)}|^{1/2}\overline{\arg(a_{in}^{(t)})},\\
{\tilde \gamma}_{ni}^{(t)}=&|a_{in}^{(t)}|^{1/2}.
\end{align*}
\end{itemize}

The construction is similar to the one in the subdiagonal case, though technical details deviate at many steps. 
We look for vectors $u_n, n \in \mathbb N,$ forming the required orthonormal basis by writing them in the form
\begin{align*}
u_n=\alpha_nw_n+&\sum_{t=1}^k\sum_{j:(j,n)\in B_\Delta}\bigl(\beta_{jn}^{(t)}v_{jn}^{(t)}+
{\tilde \beta}_{jn}^{(t)}{\tilde v}_{jn}^{(t)}\bigr)\\
+&\sum_{t=1}^k\sum_{i:(n,i)\in B_\Delta}\bigl(\gamma_{ni}^{(t)}z_{ni}^{(t)}+{\tilde \gamma}_{ni}^{(t)}{\tilde z}_{ni}^{(t)})+b_n,
\end{align*}
where $w_n,b_n, v_{jn}^{(t)}, {\tilde v}_{jn}^{(t)}, z_{ni}^{(t)}, {\tilde z}_{ni}^{(t)}\in H$ with $(j,n), (n,i)\in B_\Delta, t=1,\dots,k$.
Similarly to the subdiagonal case, the pairs $v_{j,n}, z_{j,n}$ for $(j,n)\in B_\Delta$ will be constructed in order to have  $\langle T_tu_n,u_j\rangle=a_{jn}^{(t)}$ for all $1 \le t \le k$, and the pairs 
${\tilde v}_{jn}^{(t)}, {\tilde z}_{jn}^{(t)}$ will ensure that $\langle T_t u_j,u_n\rangle=a_{nj}^{(t)}, 1 \le t \le k$.

As in the subdiagonal case, we fix in advance an orthonormal basis $(y_r)_{r=0}^\infty$ in $H$. Then for each $r \ge 0,$ setting $y_{r,0}=y_r,$
we construct a sequence  $\{y_{r,l}: l \ge 0\}$ such that $\lim_{l\to\infty}y_{r,l}=y'_r, $ $y'_r\in \bigvee_{n=1}^\infty u_n$  and $y'_r$ is close to $y_r.$
From here we derive that $\bigvee_{r=0}^{\infty} y'_r=H$, hence $ \bigvee_{n=1}^{\infty} u_n=H$ as well.

Arguing inductively we construct vectors 
\[ b_n, w_n\in H,  \,\, n\in\NN, \quad\text{and} \quad  v_{ni}^{(t)},{\tilde v}_{ni}^{(t)},z_{ni}^{(t)},{\tilde z}_{ni}^{(t)}\in H, \,\, (n,i)\in B_\Delta, t\in\{1,\dots,k\},
\] 
numbers $\alpha_n\ge 0$, and $k$-tuples $\lambda_n \in\Int W_e(\T)$
as follows:

Let $s\in\NN$ and 
suppose that the vectors 
\[ w_n, b_n, v_{ni}^{(t)}, {\tilde v}_{ni}^{(t)}, z_{ni}^{(t)}, {\tilde z}_{ni}^{(t)}\in H, \qquad n\le n_{s-1}, (n,i)\in B_\Delta, t\in\{1,\dots,k\},  
\]
vectors $y_{r,l}, \quad 2^r(2l-1)\le s-1$, 
numbers $\alpha_n\ge 0$ and $k$-tuples $\lambda_n \in \Int W_e(\T),$  $n\le n_{s-1},$ have been already constructed in such a way
that if for $1\le n \le n_{s-1}$ one sets
\begin{align*}
u_{n,s-1}:=&\alpha_nw_m+\sum_{t=1}^k\sum_{j\le n_{s-1}\atop (j,n)\in B_\Delta}\bigl(\beta_{jn}^{(t)}v_{jn}^{(t)}+
{\tilde \beta}_{jn}^{(t)}{\tilde v}_{jn}^{(t)}\bigr)\\
+&\sum_{t=1}^k\sum_{i:(n,i)\in B_\Delta}\bigl(\gamma_{ni}^{(t)}z_{ni}^{(t)}+{\tilde \gamma}_{ni}^{(t)}{\tilde z}_{ni}^{(t)}\bigr)+b_n,
\end{align*}
then the elements 
\begin{equation}\label{orthog}
u_{1,s-1},\dots,u_{n_{s-1},s-1}\quad \text{are mutually orthogonal} 
\end{equation}
and
\begin{equation}\label{norms}
\|u_{n,s-1}\|^2=1-\sum_{t=1}^k\sum_{j>n_{s-1}\atop (j,n)\in B_\Delta}(|a_{jn}|+|a_{nj}|)\cdot\frac{4}{\e^{3/2}}.
\end{equation}
Moreover, we assume that $\|y_{r,l}-y_{r,l-1}\|\le \eta_{2^r(2l-1)}$ for all $r\ge 0,l\ge 1$ with $2^r(2l-1)\le s-1$.
\medskip

(A) We first define $b_n, n_{s-1}<n\le n_s:$

For $n_{s-1}<n<n_s$ set $b_n=0$.
Let $s=2^{r(s)}(2l(s)-1),$ and define  
\[
L_{s-1}:=\bigvee_{n=1}^{n_{s-1}}u_{n,s-1}. 
\]
Arguing as in the proof of Theorem \ref{theorem2_intro}, find $y_{r(s),l(s)}\in H\setminus L_{s-1}$ such that $\|y_{r(s),l(s)}\|\le 1$ and 
\begin{equation}\label{ros}
\bigl\|y_{r(s),l(s)}-y_{r(s),l(s)-1}\bigr\|<\rho_s.
\end{equation}
Then set 
\begin{equation}
b_{n_s}:=\frac{(I-P_{L_{s-1}})y_{r(s),l(s)}}
{\bigl\|(I-P_{L_{s-1}})y_{r(s),l(s)}\bigr\|}\cdot\sqrt{\eta}.
\end{equation}

(B) Next, arguing inductively for $n=n_{s-1}+1,\dots,n_s,$ we construct vectors $v_{ni}^{(t)},{\tilde v}_{ni}^{(t)},z_{ni}^{(t)}, {\tilde z}_{ni}^{(t)}\in H$ and $k$-tuples $\lambda_n\in\Int W_e(\T)$.

Let $n_{s-1}<n<n_s$ and suppose that the vectors $v_{mi}^{(t)},{\tilde v}_{mi}^{(t)},z_{mi}^{(t)}, {\tilde z}_{mi}^{(t)}\in H$ and $\lambda_m\in\Int W_e(\T)$ have been constructed for all $m<n, (m,i)\in B_\Delta$ and $t\in\{1,\dots,k\}$.

Using Lemma \ref{lemma4} repeatedly, find for all $i, (n,i)\in B_\Delta$ and $t=1,\dots,k$ vectors $v_{ni}^{(t)}, {\tilde v}_{ni}^{(t)}, z_{ni}^{(t)}, {\tilde z}_{ni}^{(t)}\in H$ 
such that for all\ \ $m\le n, (n,i),(m,j)\in B_\Delta, 1\le t,t'\le k, (n,i,t)\ne(m, j, t'),$
\begin{align}\label{list1}
\|v_{ni}^{(t)}\|=\|{\tilde v}_{ni}^{(t)}\|&=\frac{2}{\e^{3/4}}, \notag\\
\|z_{ni}^{(t)}\|, \|{\tilde z}_{ni}^{(t)}\|&\le \e^{-1/4},\notag \\
v_{ni}^{(t)}, {\tilde v}_{ni}^{(t)}, z_{ni}^{(t)}, {\tilde z}_{ni}^{(t)}&\perp^{(\T)}  v_{mj}^{(t')}, {\tilde v}_{mj}^{(t')}, z_{mj}^{(t')}, {\tilde z}_{mj}^{(t')}\notag \\
{\tilde v}_{ni}^{(t)}, {\tilde z}_{ni}^{(t)}&\perp^{(\T)} v_{ni}^{(t)}, z_{ni}^{(t)},\notag \\
v_{ni}^{(t)},{\tilde v}_{ni}^{(t)}, z_{ni}^{(t)}, {\tilde z}_{ni}^{(t)}&\perp^{(\T)} b_m, \quad m\le n_s,\\
v_{ni}^{(t)}, {\tilde v}_{ni}^{(t)}, z_{ni}^{(t)}, {\tilde z}_{ni}^{(t)}&\perp^{(\T)} w_m, \quad m\le n_{s-1},\notag \\
v_{ni}^{(t)}, {\tilde v}_{ni}^{(t)}, z_{ni}^{(t)}, {\tilde z}_{ni}^{(t)}&\perp^{(\T)} y_{r,l},\quad2^r(2l-1)\le s,\notag \\
z_{ni}^{(t)}\perp v_{ni}^{(t)}, {\tilde z}_{ni}^{(t)}&\perp {\tilde v}_{ni}^{(t)},\notag \\
\langle \T^* v_{ni}^{(t)}, z_{ni}^{(t)}\rangle&=\langle \T {\tilde v}_{ni}^{(t)},{\tilde z}_{ni}^{(t)}\rangle=0,\notag \\
\langle T_t v_{ni}^{(t')}, z_{ni}^{t'}\rangle&= 
\langle \T^*_t {\tilde v}_{ni}^{(t')},{\tilde z}_{ni}^{(t')}\rangle=\delta_{t,t'}, \notag 
\end{align}
where $\delta_{t,t'}$ is the Kronecker symbol and 
\begin{align*}
\langle \T v_{ni}^{(t)},v_{ni}^{(t)}\rangle=
\lambda_i\|v_{ni}^{(t)}\|^2=\frac{4\lambda_i}{\e^{3/2}}, \qquad
\langle \T\tilde v_{ni}^{(t)},\tilde v_{ni}^{(t)}\rangle
 = \lambda_i\|\tilde v_{ni}^{(t)}\|^2
=\frac{4\lambda_i}{\e^{3/2}}
\end{align*}
(note that $i<n$, and so $\lambda_i\in\Int W_e(\T)$ was already constructed).

Write for short
$$
x_n=\sum_{t=1}^k\sum_{i:(n,i)\in B}\bigl(\gamma_{ni}^{(t)}z_{ni}^{(t)}+{\tilde \gamma}_{ni}^{(t)}{\tilde z}_{ni}^{(t)}\bigr)+b_n.
$$
We have
$$
\|x_n\|^2
=\sum_{t=1}^k\sum_{i:(n,i)\in B_\Delta}(|\gamma^{(t)}_{ni}|^2+|\tilde\gamma^{(t)}_{ni}|^2)+\|b_n\|^2
\le 2k\delta+\eta\le (2k+1)\delta
\le\frac{\e}{6}\le \frac{1}{6}.
$$
Thus if we set
$$
\lambda_n=
\frac{a_{nn}-\langle \T x_n,x_n\rangle}
{1-\|x_n\|^2},
$$
then
$$
\|\lambda_n-a_{nn}\|_\infty\le
\Bigl\|a_{nn}-\frac{a_{nn}}{1-\|x_n\|^2}\Bigr\|_\infty+\Bigl|\frac{\|x_n\|^2}{1-\|x_n\|^2}\Bigr|\le
\frac{2\|x_n\|^2}{1-\|x_n\|^2}\le
\frac{\e/3}{1-\frac{1}{6}}\le \frac{\e}{2}.
$$
So $\lambda_n\in\Int W_e(T)$ and $\dist\{\lambda_n,\partial W_e(T)\}>\e/2$.
\medskip

(C) Suppose that the vectors 
\[ v_{ni}^{(t)},{\tilde v}_{ni}^{(t)}, z_{ni}^{(t)}, {\tilde z}_{ni}^{(t)}, \qquad n\le n_s, (n,i)\in B_\Delta, t=1,\dots,k,\]
 and the $k$-tuples $\lambda_n\in \Int W_e(\T)$ have been constructed.

Choose inductively vectors $w_n, n_{s-1}<n\le n_s,$ satisfying 
\begin{align}\label{list2}
\|w_n\|&=1,\notag \\
\langle\T w_n &,w_n\rangle=\lambda_n,\notag \\
w_n&\perp^{(\T)} v_{mi}^{(t)},{\tilde v}_{mi}^{(t)}, z_{mi}^{(t)}, {\tilde z}_{mi}^{(t)}, \quad
 m\le n_s, (m,i)\in B_\Delta, t=1,\dots,k,\\
w_n&\perp^{(\T)} b_m, \quad m\le n_s, \notag \\
w_n&\perp^{(\T)} w_m, \quad m\ne n,\notag\\
w_n&\perp y_{r,l}\quad 2^r(2l-1)\le s.\notag
\end{align}
\medskip

(D) For every $n$ such that  $n_{s-1}+1 \le n \le n_s$  we have
$$
\sum_{t=1}^k\sum_{j:(j,n)\in B}(|\beta_{jn}^{(t)}|^2+|{\tilde \beta}_{jn}^{(t)}|^2)\frac{4}{\e^{3/2}}+\|x_n\|^2\le
\frac{8k\delta}{\e^{3/2}}+\e/6\le 1.
$$
Define now 
$$
\alpha_n=\Bigl(1-\sum_{t=1}^k\sum_{j:(j,n)\in B}\bigl(|\beta_{jn}^{(t)}|^2+|{\tilde \beta}_{jn}^{(t)}|^2\bigr)\frac{4}{\e^{3/2}}+\|x_n\|^2\Bigr)^{1/2}.
$$

For $n\le n_s$ set
\begin{align*}
u_{n,s}=
\alpha_n w_n+&
\sum_{t=1}^k\sum_{j\le n_s\atop(j,n)\in B_\Delta}\bigl(\beta_{jn}^{(t)}v_{jn}^{(t)}+{\tilde \beta}_{jn}^{(t)}{\tilde v}_{jn}^{(t)}\bigr)\\
+&\sum_{t=1}^k\sum_{i:(n,i)\in B_\Delta}\bigl(\gamma_{ni}^{(t)}z_{ni}^{(t)}+{\tilde \gamma}_{ni}^{(t)}{\tilde z}_{ni}^{(t)}\bigr)+b_n.
\end{align*}
Then the vectors $u_{1,s},\dots, u_{n_s,s}$ are mutually orthogonal and
$$
\|u_{n,s}\|^2=
1-\sum_{t=1}^k\sum_{j>n_s\atop (j,n)\in B_\Delta}(|a_{jn}|+|a_{nj}|)\frac{4}{\e^{3/2}}.
$$
In other words, \eqref{orthog} and \eqref{norms} hold with $s-1$ replaced by $s.$
\medskip

(E) Suppose that the vectors $w_n, b_n,  n\in\NN,$ and $v_{ni}^{(t)},{\tilde v}_{ni}^{(t)}, z_{ni}^{(t)}, {\tilde z}_{ni}^{(t)},  (n,i)\in B_\Delta, t=1,\dots,k,$ 
are constructed. 

Set
\begin{align}\label{defun}
u_n:=
\alpha_n w_n+&
\sum_{t=1}^k\sum_{j:(j,n)\in B_\Delta}\bigl(\beta_{jn}^{(t)}v_{jn}^{(t)}+{\tilde \beta}_{jn}^{(t)}{\tilde v}_{jn}^{(t)}\bigr)\\
+&\sum_{t=1}^k\sum_{i:(n,i)\in B_\Delta}\bigl(\gamma_{ni}^{(t)}z_{ni}^{(t)}+{\tilde \gamma}_{ni}^{(t)}{\tilde z}_{ni}^{(t)}\bigr)+b_n.\notag
\end{align}
Since
$$
u_n=\lim_{s\to\infty}u_{n,s}
$$
for all $n\in\NN$, we have 
$$
\|u_n\|=\lim_{s\to\infty}\|u_{n,s}\|=1 \qquad \text{for all}\quad n \in\mathbb N.
$$
Moreover, for  all $m,n \in \mathbb N,$ $m\ne n,$ 
$$
\langle u_m,u_n\rangle=\lim_{s\to\infty}\langle u_{m,s},u_{n,s}\rangle=0.
$$
Hence the vectors $(u_n)_{n=1}^\infty$ form an orthonormal system in $H$.

\medskip

(F) Let $(n,m)\in  B_\Delta$ and $t\in\{1,\dots,k\}$. To evaluate the inner product $\langle T_t u_m, u_n\rangle$ we use  definition (\ref{defun}) 
of $(u_n)_{n=1}^{\infty}$ and  decompose $\langle T_t u_{m}, u_n\rangle$
as
$$
\langle T_t u_{m},u_n\rangle= A_1+A_2+A_3+A_4+A_5+A_6+A_7,
$$
where
\begin{align*}
A_1=&
\Bigl\langle \alpha_mT_t w_m+\sum_{t'=1}^k\sum_{j: (j,m)\in B_\Delta} \bigl(\beta_{jm}^{(t')}T_t v_{jm}^{(t')}+{\tilde \beta}_{jm}T_t {\tilde v}_{jm}^{(t')}\bigr),\\
&\alpha_nw_n+\sum_{t''=1}^k\sum_{j':(j'n)\in B_\Delta}\bigl(\beta_{j'n}^{(t'')}v_{j'n}^{(t'')}+{\tilde \beta}_{j'n}^{(t'')}{\tilde z}_{j'n}^{(t'')}\bigr)\Bigr\rangle,\\
A_2=&\Bigl\langle \alpha_mT_t w_m+\sum_{t'=1}^k\sum_{j: (j,m)\in B_\Delta} \bigl(\beta_{jm}^{(t')}T_t v_{jm}^{(t')}+{\tilde \beta}_{jm}T_t {\tilde v}_{jm}^{(t')}\bigr),\\
&\sum_{t''=1}^k\sum_{i:(n,i)\in B}(\gamma_{ni}^{(t'')}z_{ni}^{(t'')}+{\tilde \gamma}_{ni}^{(t'')}{\tilde z}_{ni}^{(t'')})\Bigr\rangle,\\
A_3=&
\Bigl\langle \sum_{t'=1}^k\sum_{i: (m,i)\in B_\Delta}\bigl(\gamma_{mi}^{(t')}T_t z_{mi}^{(t')}+{\tilde \gamma}_{mi}^{(t')}T_t {\tilde z}_{mi}^{(t')}\bigr),\\
&\alpha_nw_n+\sum_{t''=1}^k\sum_{j':(j'n)\in B_\Delta}\bigl(\beta_{j'n}^{(t'')}v_{j'n}^{(t'')}+{\tilde \beta}_{j'n}^{(t'')}{\tilde z}_{j'n}^{(t'')}\bigr)\Bigr\rangle,
\end{align*}
\begin{align*}
A_4=&
\Bigl\langle \sum_{t'=1}^k\sum_{i: (m,i)\in B_\Delta}\bigl(\gamma_{mi}^{(t')}T_t z_{mi}^{(t')}+{\tilde \gamma}_{mi}^{(t')}T_t {\tilde z}_{mi}^{(t')}\bigr),\\
&\sum_{t''=1}^k\sum_{i:(n,i)\in B}\bigl(\gamma_{ni}^{(t'')}z_{ni}^{(t'')}+{\tilde \gamma}_{ni}^{(t'')}{\tilde z}_{ni}^{(t'')}\bigr)\Bigr\rangle,\\
A_5=&
\Bigl\langle T_tb_m,
\alpha_nw_n+\sum_{t''=1}^k\sum_{j':(j'n)\in B_\Delta}(\beta_{j'n}^{(t'')}v_{j'n}^{(t'')}+{\tilde \beta}_{j'n}^{(t'')}{\tilde z}_{j'n}^{(t'')})\Bigr\rangle,\\
A_6=&
\Bigl\langle T_tb_m,
\sum_{t''=1}^k\sum_{i:(n,i)\in B}\bigl(\gamma_{ni}^{(t'')}z_{ni}^{(t'')}+{\tilde \gamma}_{ni}^{(t'')}{\tilde z}_{ni}^{(t'')}\bigr)\Bigr\rangle,\\
A_7=&\langle T_tu_m,b_n\rangle.
\end{align*}
By construction,  using the properties of $w_m, v_{ni},$ and $ z_{ni}$  listed in \eqref{list1} and \eqref{list2}, 
we have 
$$A_1=A_3=A_4=A_5=A_6=0.$$ 
Similarly,
$$
A_2=\bigl\langle \beta_{nm}^{(t)}T_tv_{nm}^{(t)},\gamma_{nm}^{(t)}z_{nm}^{(t)}\bigr\rangle=a_{nm}^{(t)}.
$$

It remains to consider $A_7.$ Clearly $A_7=0$ if $n\notin\{n_1,n_2,\dots\}$. Suppose that $n=n_s$ for some $s \in \mathbb N$. Since $(n,m)\in B_\Delta$, we have $m>n_{s-1}$.
So $b_m=0$ and
$$
T_tu_m\in\bigvee\Bigl\{T_tw_m,T_tv_{jm}^{(t')}, T_t{\tilde v}_{jm}^{(t')}, T_tz_{mi}^{(t')}, T_t{\tilde z}_{mi}^{(t')}:
$$
$$
\hskip2cm (m,i)\in B_\Delta,(j,m)\in B_\Delta, t'=1,\dots,k\Bigr\}\subset \{b_n\}^{\perp}.
$$
So $A_7=0.$ Thus, summarising the above,
$$
\bigl\langle T_tu_m,u_n\bigr\rangle=a_{nm}^{(t)}.
$$
Similarly,
$$
\langle T_tu_n,u_m\rangle=
\overline{\langle T^*_tu_m,u_n\rangle}
=\overline{{\tilde \beta}_{nm}^{(t)}T_t {\tilde v}_{nm}^{(t)},{\tilde \gamma}_{nm}^{(t)}{\tilde z}_{nm}^{(t)}\rangle}
=a_{m,n}^{(t)}.
$$
\medskip

(G) Finally, for each $n\in \NN$, we compute the diagonal elements $\langle\T u_n,u_n\rangle$. We have
\begin{align*}
\langle\T u_n,u_n\rangle
=&
\alpha_n^2\langle \T w_n,w_n\rangle+
\sum_{t=1}^k\sum_{j:(j,n)\in B_\Delta}\Bigl(\langle \T v_{jn}^{(t)},v_{jn}^{(t)}\rangle\|v_{jn}^{(t)}\|^2\\
+&\langle \T v_{jn}^{(t)},v_{jn}^{(t)}\rangle\Bigr)\|v_{jn}^{(t)}\|^2+\langle \T x_n,x_n\rangle\\
=&\lambda_n\Bigl(\alpha_n^2+\sum_{t=1}^k\sum_{j:(j,n)\in B_\Delta}(|a_{jn}|+|a_{nj}|)\frac{4}{\e^{3/2}}\Bigr)+\langle \T x_n,x_n\rangle\\
=&\lambda_n(1-\|x_n\|^2)+\langle \T x_n,x_n\rangle\\
=&a_{nn}.
\end{align*}
Thus, $(u_n)_{n=1}^{\infty}$ satisfies \eqref{matrix22}.
\medskip

(H) 
It remains to show that $(u_n)_{n=1}^{\infty}$ is a basis in $H$. 
The argument in this step is analogous to the corresponding argument in the proof of
Theorem \ref{theorem2_intro}, step G, and its sketch below is given for completness.

Fix $r\ge 0$ and let 
$$y'_r=\lim_{l\to\infty}y_{r,l},$$
where the sequence $\{y_{r,l}: l \ge 0 \}$ is defined in Step A, and the limit exists in view of the initial setting (v). 
Setting $M_n=\bigvee \{u_1, \dots, u_n\}, n \in \mathbb N,$ we show by induction on $l$ that
\begin{equation}\label{1}
\dist^2\bigl\{y_{r,l}, M_{n_{2^r(2l-1)}}\bigr\}\le (1-\eta/2)^{l-1}
\end{equation}
for all $l\in\NN$. Since the inequality is obvious if $l=1$, we let $l\ge 2$ and $n=n_{2^r(2l-1)}.$ Assuming that \eqref{1} is true with $l$ replaced by $l-1$, 
as in the step G from the proof of Theorem \ref{theorem2_intro}, we infer that 
\begin{align*}
\dist^2\{y_{r,l}, M_n\}=&
\dist^2\{y_{r,l},M_{n-1}\}(1-\eta)\\
\le&
\bigl( (1-\eta/2)^{(l-2)/2}+\rho_s\bigr)^2(1-\eta)\\
\le& (1-\eta/2)^{l-1}.
\end{align*}
From here it follows that
$$
\dist^2\Bigl\{y'_r,\bigvee_{n=1}^\infty u_n\Bigr\}=
\lim_{l\to\infty}\dist^2\bigl\{y_{r,l},M_{2^r(2l-1)}\bigr\}\le
\lim_{l\to\infty}(1-\eta/2)^{l-1}=0,
$$
hence $y'_r\in\bigvee_{n=1}^\infty u_n$.
Since, taking into account \eqref{ros},
$$
\sum_{r=0}^\infty \|y_r'-y_r\|\le
\sum_{r=0}^\infty\sum_{l=0}^\infty\|y_{r,l+1}-y_r\|<\sum_{s=1}^\infty \rho_s<1,
$$ 
we conclude, similarly to the proof of Theorem \ref{theorem2_intro}, step G,
that $(y'_r)_{r=0}^\infty$ is a basis by \cite[Theorem 1.3.9]{Albiac}.
Hence
$\bigvee_{n=1}^\infty u_n=H$,  so that the vectors $(u_n)_{n=1}^\infty$ form an orthonormal basis in $H.$ This finishes the proof.
\end{proof}

\section{Tuples consisting of powers of a single operator}\label{section_powers}

Theorem \ref{theorem5_intro} can be directly applied to any $k$-tuple $(T,T^2,\dots,T^k)$, where $T\in B(H)$ satisfies $\Int\hat\sigma(T)\ne\emptyset$. By Theorem \ref{spectrum_polyn}, 
$$
(\lambda,\lambda^2,\dots,\lambda^k)\in\Int W_e(T,\dots,T^k)
$$ 
for all $\lambda\in\Int\hat\sigma(T)$. In particular we have the following corollary.

\begin{corollary}\label{hull}
Let $T\in B(H)$ satisfy $0\in\Int\hat\sigma(T)$, and $k\in\NN$. 
Then there exists $\delta>0$ with the following property: if $B\subset(\NN\times\NN)\setminus\Delta$ is admissible, and 
$\{a_{nj}=(a_{nj}^{(1)},\dots,a_{nj}^{(k)}): (n,j)\in B\cup\Delta\}\subset \mathbb C^k$ 
is such that 
$\sum_{j:(n,j)\in B\cup\Delta}\|a_{nj}\|_\infty\le\delta$
for all $n\in\NN$,
and $\sum_{n: (n,j)\in B\cup\Delta}\|a_{jn}\|_\infty\le\delta$ for all $j\in\NN$, 
then there is an orthonormal basis $(u_n)_{n=1}^\infty$ in $H$ satisfying
$$ 
\langle T^l u_j,u_n\rangle=a_{nj}^{(l)}
$$
for all $l=1,\dots,k$ and $n,j\in\NN$ with $(n,j)\in B\cup\Delta$.

In particular, there exists an orthonormal basis $(u_n)_{n=1}^\infty$ in $H$ such that
$$ 
\langle T^l u_j,u_n\rangle=0
$$
for all $l=1,\dots,k$ and $n,j\in\NN$ with $(n,j)\in B\cup\Delta$.
\end{corollary}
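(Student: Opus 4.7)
The plan is to reduce the statement directly to Theorem \ref{theorem5_intro} applied to the tuple $\T := (T, T^2, \ldots, T^k)$. The bridge from the spectral hypothesis on $T$ to the numerical-range hypothesis on $\T$ is provided by Theorem \ref{spectrum_polyn}: since $0 \in \Int \widehat\sigma(T)$, one has $(0, 0, \ldots, 0) \in \Int W_e(T, T^2, \ldots, T^k)$, and in particular $\Int W_e(\T) \ne \emptyset$. Set $\eta := \dist\{0, \partial W_e(\T)\} > 0$.

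First, I would fix $\e := \eta/2$ and invoke Theorem \ref{theorem5_intro} for the tuple $\T$ with this $\e$, obtaining some $\delta_0 > 0$. Then I would set $\delta := \min\{\delta_0,\, \eta/2\}$. Given an admissible set $B \subset (\NN \times \NN) \setminus \Delta$ and an array $\{a_{nj}\} \subset \CC^k$ satisfying the corollary's hypotheses with this $\delta$, I would verify the hypotheses of Theorem \ref{theorem5_intro}. Condition (ii) there is immediate: the corollary's sums over $(n,j) \in B \cup \Delta$ already dominate the corresponding sums over $(n,j) \in B$ required by Theorem \ref{theorem5_intro}. For condition (i), note that taking $j = n$ in the hypothesis gives $\|a_{nn}\|_\infty \le \delta \le \eta/2$, so $a_{nn} \in \Int W_e(\T)$ and
$$
\dist\{a_{nn}, \partial W_e(\T)\} \ge \eta - \|a_{nn}\|_\infty \ge \eta/2 = \e.
$$
Theorem \ref{theorem5_intro} then produces an orthonormal basis $(u_n)_{n=1}^\infty$ with $\langle \T u_j, u_n \rangle = a_{nj}$ for all $(n,j) \in B \cup \Delta$, and unpacking $\T = (T, \ldots, T^k)$ yields the componentwise equalities $\langle T^t u_j, u_n \rangle = a_{nj}^{(t)}$, $1 \le t \le k$.

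For the \emph{in particular} claim, I would simply specialize to $a_{nj} = 0$ for all $(n,j) \in B \cup \Delta$: the sum conditions are vacuously satisfied and $0 \in \Int W_e(\T)$ supplies the diagonal hypothesis. Alternatively, one could appeal directly to Theorem \ref{sparse2}, which under the same hypothesis $0 \in \Int W_e(\T)$ delivers an orthonormal basis annihilating every entry on $B \cup \Delta$.

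I do not anticipate any real obstacle; the content of the corollary is a spectral repackaging of Theorem \ref{theorem5_intro} via Theorem \ref{spectrum_polyn}. The only subtlety is ensuring that the diagonal constraint (i) of Theorem \ref{theorem5_intro} holds automatically from the sum hypothesis, which is handled by shrinking $\delta$ below $\dist\{0,\partial W_e(\T)\}/2$.
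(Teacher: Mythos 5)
Your proposal is correct and follows exactly the route the paper intends: the paper states the corollary as an immediate consequence of Theorem \ref{theorem5_intro} applied to $\T=(T,\dots,T^k)$, with Theorem \ref{spectrum_polyn} supplying $0\in\Int W_e(\T)$ from $0\in\Int\hat\sigma(T)$, and does not write out the routine verification of hypotheses that you carry out. Your observation that one must shrink $\delta$ below $\dist\{0,\partial W_e(\T)\}/2$ to ensure the diagonal condition (i) of Theorem \ref{theorem5_intro} is exactly the small detail the paper leaves implicit.
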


It is a standard intuition behind many properties in operator theory that if $T\in B(H)$ is invertible then the  operators $T$ and $T^{-1}$ cannot be "small" at the same time.
However, as far as matrix representations are concerned our technique allows one to get several statements
which seem to oppose this general principle. One statement of this kind
concerns the size of matrix elements  for $T$ and $T^{-1}.$

Recall that as it was proved in \cite[Theorems 2.5 and 6.2]{MT_JFA1}, 
for $\T=(T_1,\dots, T_k)\in B(H)^{k}$ one has  
$0\in W_{e}(\T)$ if and only if
for any $(a_{n})_{n=1}^{\infty }\subset (0,\infty )$ satisfying
$(a_{n})_{n=1}^{\infty }\notin \ell ^{1}(\mathbb{N})$ there exists an orthonormal
basis $(u_{n})_{n=1}^{\infty }$ in $H$ such that
\begin{equation*}
|\langle T_{l} u_{n},u_{j}\rangle |\le \sqrt{a_{n} a_{j}}
\end{equation*}
for all $n,j\in \mathbb N$ and $l=1,\dots,k$.

Then, given an invertible $T \in B(H)$ and  considering the tuple $\T=(T^{-1}, T)$
the next statement was obtained (\cite[Theorem 6.3]{MT_JFA1}).

\begin{theorem} \label{inverse}
Let $T \in B(H)$ be an invertible operator such that there exists
a nonzero $\lambda \in \mathbb{C}$ satisfying
$\{\lambda,-\lambda\} \subset \sigma _{e}(T)$. Then for any
$(a_{n})_{n=1}^{\infty }\subset (0,\infty )$ satisfying
$(a_{n})_{n=1}^{\infty }\notin \ell ^{1}(\mathbb{N})$ there exists an orthonormal
basis $(u_{n})_{n=1}^{\infty } \subset H$ such that
\begin{equation*}
|\langle T u_{n},u_{j}\rangle |\le \sqrt{a_{n} a_{j}} \qquad
\text{and} \qquad |\langle T^{-1} u_{n},u_{j}\rangle |\le \sqrt{a_{n} a_{j}}
\end{equation*}
for all $n,j\in \mathbb N $.
\end{theorem}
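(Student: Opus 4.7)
The plan is to deduce the statement directly from the characterization recalled just before it: for any $\T=(T_1,\dots,T_k)\in B(H)^k$, one has $0\in W_e(\T)$ if and only if for every $(a_n)_{n=1}^\infty\subset (0,\infty)$ with $(a_n)\notin\ell^1(\NN)$ there is an orthonormal basis $(u_n)_{n=1}^\infty$ in $H$ satisfying $|\langle T_t u_n,u_j\rangle|\le\sqrt{a_n a_j}$ for every $n,j\in\NN$ and every $t=1,\dots,k$. I would apply this to the commuting pair $\T:=(T^{-1},T)\in B(H)^2$; then the two bounds in the conclusion of Theorem \ref{inverse} are precisely the two component estimates produced by that characterization. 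So the entire proof reduces to verifying that $0\in W_e(T^{-1},T)$.

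To do so, first note that since $T$ is invertible, $0\notin\sigma_e(T)$, so $T^{-1}$ is well defined and commutes with $T$ modulo compacts (in fact, genuinely). The spectral mapping theorem for the Harte joint spectrum of a commuting tuple in the Calkin algebra then gives
$$
\sigma_e(T^{-1},T)=\{(\mu^{-1},\mu):\mu\in\sigma_e(T)\}.
$$
By hypothesis both $\lambda$ and $-\lambda$ lie in $\sigma_e(T)$, and $\lambda\ne 0$, so the two points $p_\pm:=(\pm\lambda^{-1},\pm\lambda)$ both belong to $\sigma_e(T^{-1},T)$. Using the general inclusion $\sigma_e(\T)\subset W_e(\T)$ quoted in Section \ref{prelim}, we get $p_+,p_-\in W_e(T^{-1},T)$.

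Finally, since $W_e(\T)$ is convex, the midpoint
$$
\tfrac{1}{2}(p_++p_-)=(0,0)
$$
also lies in $W_e(T^{-1},T)$, which is exactly what we need. Applying the equivalence recalled above to $\T=(T^{-1},T)$ and to the prescribed non-summable sequence $(a_n)$ then yields an orthonormal basis $(u_n)_{n=1}^\infty$ satisfying both $|\langle T u_n,u_j\rangle|\le\sqrt{a_n a_j}$ and $|\langle T^{-1}u_n,u_j\rangle|\le\sqrt{a_n a_j}$ simultaneously.

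The only conceptually substantive step is the symmetric use of the two opposite points $\pm\lambda$ in $\sigma_e(T)$: invertibility alone does not make $T$ and $T^{-1}$ jointly "small", but passing from $\sigma_e$ to $W_e$ and exploiting convexity converts the antipodal assumption on $\sigma_e(T)$ into the crucial fact that the origin of $\CC^2$ lies in $W_e(T^{-1},T)$. The remaining ingredients (joint spectral mapping for commuting tuples in the Calkin algebra, $\sigma_e(\T)\subset W_e(\T)$, and convexity of $W_e(\T)$) are standard facts already invoked in the paper, so no further technical obstacle is expected.
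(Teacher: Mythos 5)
Your proof is correct and follows exactly the reduction the paper itself sketches: apply the equivalence ``$0\in W_e(\T)$ if and only if the weighted bounds are achievable'' (recalled from \cite[Theorems 2.5 and 6.2]{MT_JFA1}) to the tuple $\T=(T^{-1},T)$, and verify $0\in W_e(T^{-1},T)$ by combining $\sigma_e(\T)\subset W_e(\T)$, the antipodal pair $\pm\lambda\in\sigma_e(T)$, and convexity of $W_e(\T)$. The only small economy you miss is that the full two-sided joint spectral mapping equality is not needed: the one-sided inclusion $\{(\mu^{-1},\mu):\mu\in\sigma_e(T)\}\subset\sigma_e(T^{-1},T)$ suffices, and this is precisely what the paper's Lemma \ref{inverse1} proves directly (for the more general $(2k)$-tuple) so as to avoid invoking the general Harte spectral mapping theorem.
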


Developing this interesting effect a bit further,  we will consider more general $(2k)$-tuples $\T$
of the form
\begin{equation}\label{2kei}
\T=(T^{-k},\dots,T^{-1},T,\dots,T^k), \quad k \in \mathbb N,
\end{equation}
where $T\in B(H)$ is an invertible operator. 
We will be interested in providing sparse
matrix representations for the elements of $\T$ with respect to the same basis and 
for the same admissible set $B \subset \mathbb N 
\times \mathbb N.$ It is instructive to recall that the set $B$ can be quite large.

To deduce them from Theorem \ref{theorem5_intro},
note first that 
\begin{equation}\label{inclus}
\sigma_e(\T)=\{(\lambda^{-k},\dots,\lambda^{-1},\lambda,\dots,\lambda^k): \lambda\in\sigma_e(T)\}
\end{equation}
 by the spectral mapping theorem for the essential spectrum,
see \cite[Corollary 30.11]{Muller}.
Since the  spectral mapping theorem in this generality is quite involved we offer a simple argument for our particular situation.
To this aim and to make a link to the studies in \cite{MT_JFA1}, recall that  
if $(T_1,\dots, T_k)\in B(H)^{k}$ then
$(\lambda_1, \dots, \lambda_k) \in  \sigma _{e}((T_1, \dots, T_k))$ if and only if there exists a sequence
of unit vectors $(x_{n})_{n=1}^{\infty }\in H$ converging weakly to zero such that 
either
$\|T_{l} x_{n} - \lambda _{l} x_{n}\|\to 0$, $1 \le l \le k,$
or $\|T^{*}_{l} x_{n} - \bar \lambda _{l} x_{n}\|\to 0$,
$1 \le l \le k,$ as $n \to \infty $,   see e.g. \cite[p. 122-123]{Dash}.
Rather than $\|x_n\|=1, n \in \mathbb N,$ it suffices to require $\inf_n\|x_n\|>0.$
The next elementary lemma 
shows that the set defined in right hand side of \eqref{inclus} is contained
in $\sigma_e(\T).$

\begin{lemma}\label{inverse1}
Let $T\in B(H)$ be an invertible operator, $k\in\NN$, and let $\T$ be given by \eqref{2kei}.
If $\la\in\sigma_e(T),$ then 
$$(\la^{-k},\dots,\la^{-1},\la,\dots,\la^k)\in \sigma_e(\T).$$
\end{lemma}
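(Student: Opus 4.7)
The plan is to use the approximate-eigenvector characterization of the joint essential spectrum recalled just before the lemma: a point $\mu=(\mu_1,\dots,\mu_{2k})$ lies in $\sigma_e(\T)$ as soon as we produce an orthonormal sequence $(x_n)\subset H$, weakly null, for which either $\|T_t x_n-\mu_t x_n\|\to 0$ for all coordinates $t$, or $\|T_t^* x_n-\bar\mu_t x_n\|\to 0$ for all coordinates $t$. Since $T$ is invertible, $0\notin\sigma(T)\supset\sigma_e(T)$, so every $\lambda\in\sigma_e(T)$ is nonzero, and all the scalars $\lambda^{-k},\dots,\lambda^k$ make sense.

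Fix $\lambda\in\sigma_e(T)$, and invoke the case $k=1$ of the characterization: there is an orthonormal sequence $(x_n)\subset H$ such that either (i) $\|Tx_n-\lambda x_n\|\to 0$, or (ii) $\|T^*x_n-\bar\lambda x_n\|\to 0$. I will handle case (i); case (ii) is symmetric after replacing $T$ with $T^*$ and $\lambda$ with $\bar\lambda$, and then using $(T^j)^*=(T^*)^j$ together with $(T^{-1})^*=(T^*)^{-1}$.

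In case (i) the goal is to show
\begin{equation*}
\|T^j x_n-\lambda^j x_n\|\longrightarrow 0, \qquad j=-k,\dots,-1,1,\dots,k.
\end{equation*}
For $j\ge 1$ I proceed by induction on $j$, using the identity
\begin{equation*}
T^{j+1}x_n-\lambda^{j+1}x_n \;=\; T\bigl(T^j x_n-\lambda^j x_n\bigr)\;+\;\lambda^{j}\bigl(Tx_n-\lambda x_n\bigr),
\end{equation*}
whose right-hand side tends to $0$ by the induction hypothesis and the boundedness of $T$. For $j=-1$, apply $T^{-1}$ to $Tx_n-\lambda x_n$ to obtain $x_n-\lambda T^{-1}x_n\to 0$, which, after dividing by $\lambda$, gives $\|T^{-1}x_n-\lambda^{-1}x_n\|\to 0$; then the same inductive identity, now with $T$ replaced by $T^{-1}$ and $\lambda$ by $\lambda^{-1}$, propagates the convergence to every negative power down to $j=-k$.

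Combining the $2k$ convergences obtained, the orthonormal (in particular weakly null) sequence $(x_n)$ is a simultaneous approximate eigenvector for all coordinates of $\T$ with eigenvalue-tuple $(\lambda^{-k},\dots,\lambda^{-1},\lambda,\dots,\lambda^k)$. Hence this tuple belongs to $\sigma_e(\T)$. The only mild technical point — the ``main obstacle'', such as it is — is the systematic bookkeeping needed to pass from an approximate eigenvector of $T$ to one of every integer power $T^j$, $-k\le j\le k$, using only the boundedness of $T$ and $T^{-1}$; the argument above reduces this to one inductive step in each direction.
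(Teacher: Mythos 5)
Your proof is correct, and it rests on the same approximate-eigenvector characterization of $\sigma_e(\T)$ as the paper's, but the bookkeeping is arranged a bit differently. The paper replaces the original sequence $x_n$ by $y_n=\lambda^k T^k x_n$, so that for every $j$ with $-k\le j\le k$ the vector $(T^j-\lambda^j)y_n$ can be written as a fixed polynomial in $T$ (with only nonnegative powers) applied to $(T-\lambda)x_n$; this costs it the orthonormality of the test sequence, and it then has to appeal to the weaker version of the criterion with $\inf_n\|y_n\|>0$ and weak nullity of $y_n$. You avoid the auxiliary sequence altogether: you keep the orthonormal $x_n$ and propagate the convergence inductively, invoking the boundedness of $T^{-1}$ in the inductive step for negative powers. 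Both arguments are sound and of comparable length; yours is marginally cleaner in that the test sequence stays orthonormal throughout, while the paper's has the small advantage of never writing $T^{-1}$ explicitly on the error term. One cosmetic slip in your write-up: in case (ii) you do not actually need to ``replace $T$ by $T^*$'' and redo the induction, since the same identities $T^{*(j+1)}x_n-\bar\lambda^{j+1}x_n=T^*(T^{*j}x_n-\bar\lambda^j x_n)+\bar\lambda^j(T^*x_n-\bar\lambda x_n)$ and $(T^{*})^{-1}x_n-\bar\lambda^{-1}x_n=-\bar\lambda^{-1}(T^*)^{-1}(T^*x_n-\bar\lambda x_n)$ apply verbatim; but this is only a matter of phrasing, not a gap.
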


\begin{proof}
Let  $(x_n)_{n=1}^{\infty} \subset H$ be a sequence converging weakly to $0$ such that 
$\inf_{n \ge 1}\|x_n\|>0$ and 
either $\|(T-\la)x_n\|\to 0$ or $\|(T^*-\bar\la)x_n\|\to 0$ as $n \to \infty.$

In the first case let $y_n=\la^k T^kx_n,  n \ge 1$. Then $\inf_n\|y_n\|>0$ since $T$ is invertible, $y_n\to 0, n \to \infty,$ weakly and 
$$(T^l-\la^l)y_n=\la^k T^l(T^{l-1}+\la T^{l-2}+\cdots+\la^{l-1})(T-\la)x_n\to 0$$
 for all $l=1,\dots,k$. Furthermore,
$(T^{-l}-\la^{-l})y_n=\la^{k-l}T^{k-l}(\la^l-T^l)x_n\to 0$ for all $l=1,\dots,k$. So
$$
(\la^{-k},\dots,\la^{-1},\la,\dots,\la^k)\in\sigma_e(\T).
$$
In the second case set $z_n=\bar\la^k T^{*k}x_n, n \ge 1$. We have similarly $(T^{*l}-\bar\la^l)z_n\to 0, n \to \infty,$ for all $l=\pm 1,\dots,\pm k$, 
hence again $(\la^{-k},\dots,\la^{-1},\la,\dots,\la^k)\in\sigma_e(\T)$.

\end{proof}

To be able to apply Theorem \ref{theorem5_intro} to  $(2k)$-tuples $\T$ of the form \eqref{2kei},
 we should ensure that $0\in\Int W_e(\T),$ and the statement below provides $\T$ with this property under natural geometric spectral assumptions on $T$.

\begin{proposition}\label{ituple}
Let $T\in B(H)$ be an invertible operator, and let $s>r>0$ be fixed. Suppose that $\{re^{i\varphi}, se^{i\varphi}: 0\le \varphi< 2\pi\}\subset\sigma_e(T)$. 
Let $k\in\NN$ and $\T=(T^{-k},\dots,T^{-1},T,\dots,T^k)\in B(H)^k$. 
Then
$0\in\Int W_e(\T).$
\end{proposition}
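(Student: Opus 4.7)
The plan is to use Lemma \ref{inverse1} to lift the two circles $r\mathbb T\cup s\mathbb T\subset\sigma_e(T)$ into $\sigma_e(\T)\subset W_e(\T)$, realise $0$ as the barycentre of a uniform probability measure on these circles, and then show that small signed perturbations of that measure sweep out a full neighbourhood of $0$ in $\CC^{2k}$.

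First, by Lemma \ref{inverse1},
$$E:=\{(\lambda^{-k},\dots,\lambda^{-1},\lambda,\dots,\lambda^k):\lambda\in r\mathbb T\cup s\mathbb T\}\subset\sigma_e(\T)\subset W_e(\T).$$
Since $W_e(\T)$ is closed and convex, it contains $\overline{\conv}\, E$, so it suffices to show $0\in\Int(\overline{\conv}\, E)$. For every Borel probability measure $\mu$ on $r\mathbb T\cup s\mathbb T$, the barycentre
$$\Phi(\mu):=\int(\lambda^{-k},\dots,\lambda^k)\, d\mu(\lambda)$$
belongs to $\overline{\conv}\,E$, and taking $\mu_0:=\frac12(\mu_r+\mu_s)$ with $\mu_r,\mu_s$ the uniform probability measures on $r\mathbb T, s\mathbb T$, orthogonality of the characters on $\mathbb T$ gives $\Phi(\mu_0)=0$.

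Next I would consider perturbations $\mu_\varepsilon:=\mu_0+\varepsilon\nu$, where $\nu$ is a real signed measure of total mass zero and bounded density on $r\mathbb T\cup s\mathbb T$; for small $\varepsilon>0$ such $\mu_\varepsilon$ is a probability measure. Given $c_m^r,c_m^s\in\CC$ for $m=1,\dots,k$, I would take $\nu$ to have density $\sum_{m=1}^k\mathrm{Re}(c_m^r e^{-im\varphi})$ on $r\mathbb T$ and $\sum_{m=1}^k\mathrm{Re}(c_m^s e^{-im\varphi})$ on $s\mathbb T$, both with respect to $\frac{d\varphi}{2\pi}$. The zero-mean property of each $\mathrm{Re}(e^{-im\varphi})$ with $m\ge 1$ guarantees the total-mass-zero condition, and a direct Fourier computation yields
$$\Phi(\nu)_m=\tfrac12(r^m c_m^r+s^m c_m^s),\quad \Phi(\nu)_{-m}=\tfrac12(r^{-m}\overline{c_m^r}+s^{-m}\overline{c_m^s}),\quad m=1,\dots,k.$$

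Finally, the map $(c_m^r,c_m^s)\mapsto(\Phi(\nu)_m,\Phi(\nu)_{-m})$ splits, after separating real and imaginary parts, into two $2\times 2$ real blocks with matrices $\bigl(\begin{smallmatrix}r^m&s^m\\r^{-m}&s^{-m}\end{smallmatrix}\bigr)$ and $\bigl(\begin{smallmatrix}r^m&s^m\\-r^{-m}&-s^{-m}\end{smallmatrix}\bigr)$. Both have determinant $\pm((r/s)^m-(s/r)^m)\ne 0$ because $r<s$, so each mode block is an $\mathbb R$-linear isomorphism of $\mathbb R^4$. Consequently, as $(c_m^r,c_m^s)_{m=1}^k$ ranges over $\CC^{2k}$, the vector $\Phi(\nu)$ ranges over all of $\CC^{2k}$, and hence $\Phi(\mu_\varepsilon)$ fills a neighbourhood of $0$ for every sufficiently small $\varepsilon>0$. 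This proves $0\in\Int(\overline{\conv}\, E)\subset\Int W_e(\T)$. The only technical point is the block-invertibility verification, which reduces to the single inequality $r\ne s$ granted by hypothesis.
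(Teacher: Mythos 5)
Your proof is correct and reaches the conclusion by a genuinely different route from the paper's, although both ultimately rest on Lemma~\ref{inverse1} and the convexity of $W_e(\T)$. The paper invokes Theorem~\ref{spectrum_polyn} to obtain that $0\in\Int\conv\{(\lambda,\dots,\lambda^k):|\lambda|=r\}$ (and likewise with $s$ in place of $r$), transports these sets into $\conv\sigma_e(\T)$ via Lemma~\ref{inverse1} and the identity $\lambda^{-j}=\bar\lambda^j/r^{2j}$ on $r\mathbb T$, and then plays the two circles against each other: averaging the $r$-circle point associated with $z$ against the $s$-circle point associated with $-z$ makes the last $k$ coordinates vanish while the first $k$ coordinates become $\bar z_j\bigl(\tfrac{1}{2r^{2j}}-\tfrac{1}{2s^{2j}}\bigr)$, nonzero since $r\ne s$. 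This yields a ball in $\CC^k\times\{0\}$, symmetrically a ball in $\{0\}\times\CC^k$, and convexity fills out a neighbourhood of $0$. You instead bypass Theorem~\ref{spectrum_polyn} entirely and give a self-contained measure-theoretic argument: you write points of $\overline{\conv}\,E$ as barycentres of probability measures on $r\mathbb T\cup s\mathbb T$, take the uniform measure $\mu_0$ as base point, and perturb by trigonometric densities; orthogonality of characters, together with the invertibility of the $2\times 2$ mode matrices (which again uses $r\ne s$), shows the barycentre map is open at $\mu_0$. Your route is more elementary and self-contained, at the cost of an explicit Fourier computation, while the paper's is shorter once Theorem~\ref{spectrum_polyn} is granted. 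One small point of rigor in your final step: rather than letting $(c_m^r,c_m^s)_{m=1}^k$ range over all of $\CC^{2k}$, restrict to a fixed bounded set of coefficients so that the density bound, and hence the admissible $\e$, is uniform; the image of that bounded set under your linear isomorphism is still a neighbourhood of $0$, which is all that is needed.
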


\begin{proof}
By Theorem \ref{spectrum_polyn}, 
there exists $a>0$ such that if $z=(z_1,\dots, z_k)\in \mathbb C^k$ satisfies $\|z\|_\infty<a,$ then $z \in \conv\{(\la,\dots,\la^k):|\la|=r\}$. 
So for all $z=(z_1, \dots, z_k)$ with $\|z\|_\infty \le a,$ using Lemma \ref{inverse1}, one has
$$
\Bigl(\frac{{\bar z_k}^k}{r^{2k}},\dots,\frac{\bar z_1}{r^2},z_1,\dots,z_k\Bigr)\in \conv\sigma_e(\T).
$$

Similarly, there exists $a'>0$ such that all $z=(z_1, \dots, z_k) \in \CC^k$ with $\|z\|_\infty \le a'$ satisfy $z \in\conv\{\la,\dots,\la^k):|\la|=s\}$, hence
$$
\Bigl(\frac{-\bar z_k}{s^{2k}},\dots,\frac{-\bar z_1}{s^2},-z_1,\dots,-z_k\Bigr)\in\conv \sigma_e(\T).
$$

Therefore, for all $z=(z_1,\dots,z_k)\in\CC^k$ with $\|z\|_\infty \le\min\{a,a'\}$ we have
$$
\Bigl(\frac{\bar z_k}{2r^{2k}}-\frac{\bar z_k}{2s^{2k}},\dots, \frac{\bar z_1}{2r^2}-\frac{\bar z_1}{2s^2},0,\dots,0\Bigr)\in \conv \sigma_e(\T)\subset W_e(\T).
$$
since $W_e(\T)$ is a convex set. 
So $(z_{-k},\dots, z_{-1},0,\dots, 0)\in W_e(\T)$ for all $z_{-k},\dots,z_{-1}\in \CC$ with sufficiently small $\max\{|z_{-j}|: 1\le j\le k\}.$ 

Similarly, $(0,\dots, 0, z_1,\dots, z_k)\in W_e(\T)$ for all $z_1,\dots,z_k\in\CC$ with $\max\{|z_{j}|: 1\le j\le k\}$ small enough. 
Combining these two inclusions and using the convexity of $W_e(\T)$ again, we obtain that $(0,\dots,0)\in \Int W_e(\T)$.
\end{proof}

This together with Theorem \ref{theorem5_intro} proves the following corollary.

\begin{corollary}\label{inverse_sparse_intro}
Let $T\in B(H)$ be an invertible operator, and $B\subset(\NN\times\NN)\setminus\Delta$ be an admissible set. 
Assume that there exist $s>r>0$ such that $s\mathbb T\cup r\mathbb T \subset \sigma_e(T)$, 
where $\mathbb T$ stands for the unit circle.
Then for every $k \in \mathbb N$ there exists an orthonormal basis $(u_n)_{n\in\NN}$ in $H$ such that
$$
\langle T^lu_j,u_n\rangle=0
$$
for all $l=\pm 1,\dots,\pm k$ and $(n,j)\in B\cup\Delta$.
\end{corollary}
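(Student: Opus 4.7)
The plan is to reduce directly to Theorem \ref{theorem5_intro} applied to the $(2k)$-tuple
\[
\T=(T^{-k},\dots,T^{-1},T,\dots,T^k)\in B(H)^{2k},
\]
with all prescribed matrix entries equal to zero. In other words, the corollary should be an immediate assembly of the two preceding results, Proposition \ref{ituple} (to provide interior points of the joint essential numerical range) and Theorem \ref{theorem5_intro} (to realize the sparse pattern).

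First, I would verify that $0\in\Int W_e(\T)$. The hypothesis $s\mathbb T\cup r\mathbb T\subset\sigma_e(T)$ is precisely the assumption $\{re^{i\varphi},se^{i\varphi}:0\le\varphi<2\pi\}\subset\sigma_e(T)$ of Proposition \ref{ituple}, so that proposition applies and yields $0\in\Int W_e(\T)$. In particular, $\Int W_e(\T)\neq\emptyset$, and one may fix $\e>0$ with $\dist(0,\partial W_e(\T))>\e$.

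Next, I would invoke Theorem \ref{theorem5_intro} with this $\e$, obtaining the corresponding $\delta>0$. Setting $a_{nj}=(0,\dots,0)\in\CC^{2k}$ for every $(n,j)\in B\cup\Delta$, condition (i) of Theorem \ref{theorem5_intro} holds since $a_{nn}=0\in\Int W_e(\T)$ with $\dist(0,\partial W_e(\T))>\e$, and condition (ii) is trivial because every $\|a_{nj}\|_\infty=0$. The theorem then produces an orthonormal basis $(u_n)_{n=1}^{\infty}\subset H$ such that
\[
\langle \T u_j,u_n\rangle=0\qquad\text{for all}\quad(n,j)\in B\cup\Delta,
\]
which, read coordinate by coordinate of $\T$, is exactly $\langle T^t u_j,u_n\rangle=0$ for $t=\pm 1,\dots,\pm k$ and all $(n,j)\in B\cup\Delta$.

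Since both Proposition \ref{ituple} and Theorem \ref{theorem5_intro} have already been proved, there is no serious obstacle here; the entire argument is a matter of packaging the powers $T^{\pm 1},\dots,T^{\pm k}$ into a single tuple and verifying that the trivial choice of zero data satisfies the hypotheses of Theorem \ref{theorem5_intro}. The only small point worth mentioning is the purely notational identification of the ordering $(T^{-k},\dots,T^{-1},T,\dots,T^k)$ in the tuple $\T$ with the index set $t=\pm 1,\dots,\pm k$ appearing in the statement of the corollary.
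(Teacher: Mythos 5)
Your proof is correct and follows exactly the route the paper intends: the paper itself states that Proposition \ref{ituple} (giving $0\in\Int W_e(\T)$ for the $(2k)$-tuple) together with Theorem \ref{theorem5_intro} (applied with the zero array, which trivially satisfies conditions (i) and (ii)) proves the corollary. Your write-up simply spells out this assembly, so there is nothing to add.
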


Simple examples enjoying spectral assumptions in  Corollary \ref{inverse_sparse_intro} can be found already within the class of normal operators.
 In particular, one may consider a multiplication operator $(Mf)(z)=zf(z)$ on $L^2(S,d\mu),$ where
$S \subset \mathbb C$ is a Borel set containing $r\mathbb T$ and $s\mathbb T, s >r>0$,
 and $\mu$ is a Borel measure on $S$ whose essential support contains $r\mathbb T$ and $s\mathbb T$ as well.
Of course, more general examples of this kind can be provided by replacing $z$ with a function $g \in L^\infty(S, d \mu).$
Even in this case the existence of sparse representations provided by Corollary \ref{inverse_sparse_intro} is far from being obvious.

Another class of examples of operators fitting  Corollary \ref{inverse_sparse_intro} is provided by invertible composition operators $C$ considered e.g. in \cite{Ridge1}. 
If the spectral radius $r(C)$ of $C$ equals $s$ and  $r(C^{-1})=r^{-1},$ then
by rotation invariance (\cite[Theorem B]{Ridge1}) we have
$\partial \sigma (C)\supset s\mathbb T$ and $\partial \sigma (C^{-1})\supset r^{-1}\mathbb T$ whenever $r$ and $s$ are different from $1.$ 
So  $s\mathbb T \subset \sigma_e(T)$ and $r \mathbb T \subset \sigma_e(T)$ since $s\mathbb T$ and $r\mathbb T$ belong to $\partial \sigma(C).$ 
A concrete example of such a composition operator on $L^2(0,1)$ is considered e.g. 
in \cite[Example (2)]{Ridge1}. A similar and quite general example
now  addressing composition operators  
on the Hardy space $H^2(\mathbb D)$ is analysed
in \cite[Theorem 6]{Nord}.
In the framework of composition operators, the availability of sparse representations seem to be also highly nontrivial 

Remark finally that similar applications of our results can be provided in the setting of subdiagonal arrays $B,$
but we omit easy formulations of the corresponding corollaries in this setting.

\section{Acknowledgments}

We would like to thank the referee for
of pertinent comments and remarks that led to improvement of this paper.

\end{document}